\newcommand{\N}{\mathbb{N}}
\newcommand{\R}{\mathbb{R}}
\newcommand{\Div}{\mathrm{div} \, }
\newcommand{\dx}{\, {\rm d} x}
\newcommand{\dtau}{\, {\rm d} \tau}
\newcommand{\Jac}{\, {\rm Jac}}
\newcommand{\eps}{\varepsilon}
\renewcommand{\phi}{\varphi}
\newcommand{\GEV}[1]{{\rm (}\hyperlink{prob}{{\rm GEV;} #1}{\rm )}}
\newtheorem{lemma}{Lemma}[section]
\newtheorem{thm}[lemma]{Theorem}
\newtheorem{prop}[lemma]{Proposition}
\theoremstyle{definition}
\newtheorem{defi}[lemma]{Definition}
\newtheorem{rmk}[lemma]{Remark}
\numberwithin{equation}{section}
\DeclareMathOperator*{\esssup}{ess \, sup}
\DeclareMathOperator*{\essinf}{ess \, inf}
\DeclareMathOperator*{\supp}{supp}
\pgfplotsset{compat = newest}
\begin{document}
\title[Bobkov-Tanaka type spectrum]{On Bobkov-Tanaka type spectrum \\ for the double-phase operator}
	\author[L. Gambera]{Laura Gambera}
	\address[L. Gambera]{Dipartimento di Matematica e Informatica, Universit\`a degli Studi di Catania, Viale A. Doria 6, 95125 Catania, Italy}
	\email{laura.gambera@unipa.it}
\author[U. Guarnotta]{Umberto Guarnotta}
\address[U. Guarnotta]{Dipartimento di Ingegneria Industriale e Scienze Matematiche, Università Politecnica delle Marche, Via Brecce Bianche 12,
60131 Ancona, Italy}
\email{u.guarnotta@univpm.it}
	
	\begin{abstract}
	Moving from the seminal papers by Bobkov and Tanaka \cite{BT,BT2,BT3} on the spectrum of the $(p,q)$-Laplacian, we analyze the case of the double-phase operator. We discuss the region of parameters in which existence and non-existence of positive solutions occur. The proofs are based on normalization procedures, the Nehari manifold, and truncation techniques, exploiting Picone-type inequalities and an ad-hoc strong maximum principle.
	\end{abstract}

	\maketitle
	
	\let\thefootnote\relax
	\footnote{{\bf{MSC 2020}}: 35J60, 35J25, 35B38, 35P30.}
	\footnote{{\bf{Keywords}}: double-phase operator, non-homogeneous spectrum, Nehari manifold, Picone inequality.}
	\footnote{\Letter \quad Laura Gambera (laura.gambera@unipa.it).}

\section{Introduction and main result}

Let $\Omega\subseteq \R^N$, $N\ge 2,$ be a bounded domain with boundary $\partial \Omega$ of class $C^{1,\tau}$, $\tau\in(0,1]$, $\alpha, \beta \in \R,$ and $1<q<p<N,$ with $p<q^*:=\frac{Nq}{N-q}$. This paper concerns existence and non-existence of positive solutions to
	
	\begin{equation}
 \hypertarget{prob}{}
		\label{prob}
		\tag{GEV; $\alpha,\beta$}
		\left\{
		\begin{alignedat}{2}
			-\Delta_{p}^a u -\Delta_{q}u&= \alpha  a(x)|u|^{p-2}u+ \beta |u|^{q-2}u &&\quad \mbox{in}\;\; \Omega, \\
   u &=0 &&\quad \mbox{on}\;\; \partial\Omega. 
		\end{alignedat}
		\right.
	\end{equation} Here $\Delta_{p}^a$ is the weighted $p$-Laplacian, defined by  $$\Delta_{p}^a u= \Div (a(x) |\nabla u|^{p-2}\nabla u),$$ where $a\in C^{0,1}(\overline{\Omega})$ is positive in $\Omega$ and belongs to the Muckenhoupt class $A_p$, that is,
\begin{equation}
\label{Muck}
\frac{1}{|B|} \int_B a \dx \leq C \left(\frac{1}{|B|} \int_B a^{\frac{1}{1-p}} \dx \right)^{1-p}
\end{equation}
for some $C>0$ and any ball $B\subseteq \Omega$; see \cite[p.297]{HKM}.
  
  The differential operator in \eqref{prob} encompasses both the $(p,q)$-Laplacian and the double-phase operator. In particular, if $\inf_\Omega a >0,$ this operator has balanced $p$-growth, allowing the problem to be set in standard Sobolev spaces (see \cite{BT}). On the other hand, if $\inf_\Omega a=0$ then the operator has unbalanced growth, which makes \eqref{prob} fall into the Musielak-Orlicz setting.
  
  
  Double-phase operators were introduced in \cite{Z} to describe models of strongly anisotropic materials. In the same years, local regularity of minimizers of integrals with non-standard growth was investigated in \cite{PM1,PM2}. More recently, other local regularity results have been provided in \cite{CM,BCM,DFP,O,PM3, PM4} (see also the references therein). It is worth noticing that, in this setting, global regularity is far from being understood: indeed, the classical nonlinear regularity theory \cite{LI} is no more applicable for problems driven by unbalanced growth operators. Several existence, uniqueness, and multiplicity results for double-phase problems were obtained: here we mention \cite{LD,GP,GW1,GW2,GKS,PRZ,CGHW,CPW,GLW,CW,ABDW} and the survey \cite{P}.
  
  Since $\Delta_p^a+\Delta_q$ is not homogeneous, its spectrum can be defined in different ways. The first definition of spectrum has been introduced in \cite{CS} and is based on the Rayleigh quotient. A different type of spectrum was considered in \cite{GGP}: the eigenfunctions associated with the eigenvalue $\lambda>0$ are defined as solutions to
  \begin{equation*}
		\left\{
		\begin{alignedat}{2}
			-\Delta_p^a u-\Delta_q u &= \lambda a(x)|u|^{p-2}u \quad &&\mbox{in} \;\; \Omega, \\
			u &= 0 \quad &&\mbox{on} \;\; \partial\Omega.\\
		\end{alignedat}
		\right.
	\end{equation*}
 This definition can be generalized by taking into account the presence of the $q$-Laplacian. In this respect, a quite natural choice, reminiscent of the Fu\u{c}ik spectrum, is suggested by Bobkov and Tanaka (see, e.g., \cite{BT,BT2,BT3}), who investigated $(p,q)$-Laplacian problems. In particular, \cite{BT} concerns existence and non-existence of solutions to
  \begin{equation*}
		\label{prob1}
		\left\{
		\begin{alignedat}{2}
			-\Delta_{p}u -\Delta_{q}u&= \alpha  u^{p-1}+ \beta u^{q-1} &&\quad \mbox{in}\;\; \Omega, \\
            u &> 0 &&\quad \mbox{in} \;\; \Omega, \\
			u &=0 &&\quad \mbox{on}\;\; \partial\Omega.
		\end{alignedat}
		\right.
    	\end{equation*} Inspired by this paper, we address a similar issue in the setting of the double-phase operator.
     
     
     In order to state the main results of the paper, we introduce the eigenpair $(\lambda_1^a(p),\phi_p^a)$ and $(\lambda_1(q),\phi_q)$, related to $\Delta_p^a$ and $\Delta_q$, respectively (see Section 2 for details). The following linear independence condition will be pivotal in the description of the spectrum:
     \begin{equation}
         \label{LI}
         \tag{LI}
         \phi_p^{a}\neq k\phi_q \quad \text{for any }k\in\R.
     \end{equation}
For a discussion about \eqref{LI}, we address the reader to \cite[p.3280]{BT}, which focuses the $(p,q)$-Laplacian case. We also introduce the following constants, that will play a crucial role:
 \begin{equation}\label{smeno}\tilde{s}_{-}:=\frac{ \int_\Omega |\nabla \phi^{a}_{p}|^q \dx}{\int_\Omega (\phi^{a}_{p})^q \dx}, \quad \quad s^{*}_{-}:=\lambda_1^a(p)-\tilde{s}_{-},\end{equation}
\begin{equation}\label{spiù}
    \tilde{s}_{+}:= \frac{ \int_\Omega a|\nabla \phi_{q}|^p \dx}{\int_\Omega a\phi_{q}^p \dx}, \quad \quad s^{*}_{+}:=\tilde{s}_{+}-\lambda_1(q).
\end{equation}Let  \begin{equation}\label{sstar}
  s^*:= \lambda_1^a(p)-\lambda_1(q).\quad \quad 
\end{equation}
Due to the possible lack of regularity for $\phi^a_p$, $\tilde{s}_-$ and $s^*_-$ may be not well defined: thus, if $\int_\Omega |\nabla \phi^a_p|^q \dx$ is not finite, we posit $\tilde{s}_-:=+\infty$ and $s^*_-:=-\infty$. It is readily seen that $\tilde{s}_+\geq \lambda_1^a(p)$, $\tilde{s}_-\geq \lambda_1(q)$, and $s^*_-\le s^*\le s^*_{+}$, with strict inequalities if and only if \eqref{LI} holds true. We explicitly notice that $\tilde{s}_+$ and $s^*_+$ are well defined, since $\phi_q\in C^{1,\tau}(\overline{\Omega})$ (see Section 2).

	\makeatletter
	\tikzset{
		dot diameter/.store in=\dot@diameter,
		dot diameter=3pt,
		dot spacing/.store in=\dot@spacing,
		dot spacing=10pt,
		dots/.style={
			line width=\dot@diameter,
			line cap=round,
			dash pattern=on 0pt off \dot@spacing
		}
	}
	\makeatother

\begin{figure}[h]
\raggedright
\begin{subfigure}[c]{0.3\textwidth}
\begin{center}
\begin{tikzpicture}[every node/.style={scale=0.5}]


\begin{axis}[xmin = -2, xmax = 5, ymin=-2, ymax=5, axis lines = middle, xlabel=$\alpha$, ylabel=$\beta$, xtick={2,3}, ytick={1,4}, xticklabels={$ $,$\tilde{s}_+$}, yticklabels={$ $,$\tilde{s}_-$}]

\filldraw (2,0) circle (0pt) node[anchor=north west]{$\lambda_1^a(p)$};
\filldraw (0,1) circle (0pt) node[anchor=south east]{$\lambda_1(q)$};

\draw[thick] (3,1)--(5,1); 
\draw[thick] (2,4)--(2,5); 
\addplot[domain = 2:3,samples = 200,smooth,thick]{4/(3*x-5)}; 
\filldraw(2.1,2.2) circle (0pt) node{$\mathcal{C}$}; 
\filldraw (2,4) circle (1.5pt) node[anchor=south east]{$s=s^*_-$}; 
\filldraw (3,1) circle (1.5pt) node[anchor=north west]{$s=s^*_+$}; 
\filldraw (2,1) circle (1.5pt) node[anchor=south east]{$s=s^*$}; 

\addplot[domain = -2:5,samples = 200,smooth, dashed]{x+2}; 
\addplot[domain = -2:5,samples = 200,smooth, dashed]{x-1}; 
\addplot[domain = -2:5,samples = 200,smooth, dashed]{x-2}; 

\draw[dot diameter=1pt, dot spacing=3pt, dots] (-1.9,1)--(2,1); 
\draw[dot diameter=1pt, dot spacing=3pt, dots] (2,-1.9)--(2,1); 
\draw[fill=white,draw=none,pattern=dots] (-2,-2) rectangle (1.95,1); 
\path[pattern = dots] plot [domain = 2:3, samples = 200] (\x,{4/(3*\x-5)})
-- plot [domain = 3:2, samples = 200] (\x,{5}); 
\draw[fill=white,draw=none,pattern=dots] (3,1) rectangle (5,5); 
\end{axis}
\end{tikzpicture}
\caption{${\rm (LI)}$ holds true.}
\label{LItrue}
\end{center}
\end{subfigure}
\hspace{4cm}
\begin{subfigure}[c]{0.3\textwidth}
\begin{center}
\begin{tikzpicture}[every node/.style={scale=0.5}]


\begin{axis}[xmin = -2, xmax = 5, ymin=-2, ymax=5, axis lines = middle, xlabel=$\alpha$, ylabel=$\beta$, xtick={2}, ytick={1}, xticklabels={$ $}, yticklabels={$ $}]

\filldraw (2,0) circle (0pt) node[anchor=north west]{$\lambda_1^a(p)$};
\filldraw (0,1) circle (0pt) node[anchor=south east]{$\lambda_1(q)$};
\filldraw (2,1) circle (1.5pt) node[anchor=south east]{$s=s^*$}; 

\draw[dot diameter=1pt, dot spacing=3pt, dots] (-1.9,1)--(4.9,1); 
\draw[dot diameter=1pt, dot spacing=3pt, dots] (2,-1.9)--(2,4.9); 
\draw[fill=white,draw=none,pattern=dots] (-2,-2) rectangle (1.95,1); 
\draw[fill=white,draw=none,pattern=dots] (2.01,1) rectangle (4.95,4.95); 
\end{axis}
\end{tikzpicture}
\caption{${\rm (LI)}$ does not hold true.}
\label{LIfalse}
\end{center}
\end{subfigure}
\caption{\label{specfig}Summary of results. Existence of positive solutions is guaranteed in the white regions, while no positive solutions exist within the dotted regions. The curve $\mathcal{C}$ is represented with a black line, while the dotted oblique lines are related to the change of variable $s=\alpha-\beta$.}
\end{figure}
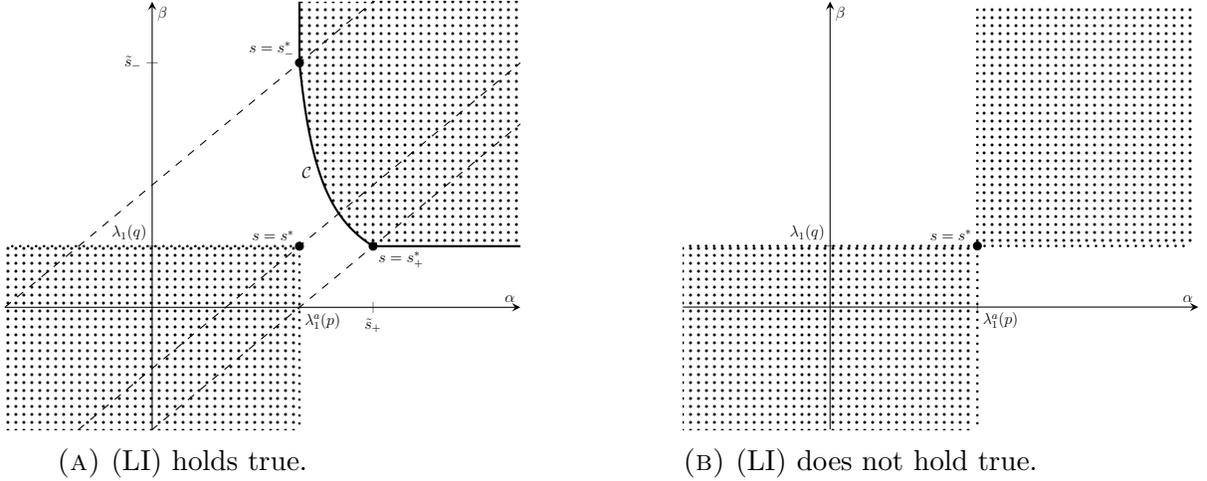

 Let us briefly sketch the main results of this paper. Regardless of \eqref{LI}, we have:
\begin{itemize}
\item non-existence for $(\alpha,\beta)\in\left((-\infty,\lambda_1^a(p)]\times(-\infty,\lambda_1(q)]\right) \setminus \{(\lambda_1^a(p),\lambda_1(q))\}$ (Theorem \ref{notex});
\item existence for $(\alpha,\beta)\in(-\infty,\lambda_1^a(p))\times(\lambda_1(q),+\infty)$ (Theorem \ref{normalize});
\item existence for $(\alpha,\beta)\in(\lambda_1^a(p),+\infty)\times(-\infty,\lambda_1(q))$ (Theorem \ref{nehari}).
\end{itemize}
If \eqref{LI} does not hold true, then we get:
\begin{itemize}
\item existence for $(\alpha,\beta)=(\lambda_1^a(p),\lambda_1(q))$ (Theorem \ref{notex});
\item non-existence for $(\alpha,\beta)\in(\lambda_1^a(p),+\infty)\times(\lambda_1(q),+\infty)$ (Proposition \ref{lambdaprop}).
\end{itemize}
On the other hand, if \eqref{LI} holds true, we consider the function $\lambda^*:\R\to\R$ defined as
\begin{equation}\label{definitionlambda}
\lambda^{*}(s):= \sup \{\lambda\in \R: (\hyperlink{prob}{\text{GEV}; \lambda+s, \lambda})\; \text{has a positive solution}  \}. \end{equation}
The curve $\mathcal{C}$ which separates the region of existence from the region of non-existence of positive solutions to \eqref{prob} corresponds to the set
$$\mathcal{C}= \{(\lambda^*(s)+s, \lambda^*(s)): s \in \R\}.$$
A description of $\mathcal{C}$ is given in Propositions \ref{epsilon}--\ref{lambdaprop}. Setting $s:=\alpha-\beta$, if \eqref{LI} is satisfied we obtain:
\begin{itemize}
\item non-existence for $(\alpha,\beta)=(\lambda_1^a(p),\lambda_1(q))$ (Theorem \ref{notex});
\item existence for $(\alpha,\beta)\in([\lambda_1^a(p),\tilde{s}_+)\times[\lambda_1(q),\lambda^*(s)))\setminus\{(\lambda_1^a(p),\lambda_1(q))\}$ (Theorems \ref{nehari2} and \ref{connessione}, besides Proposition \ref{epsilon});
\item non-existence for $(\alpha,\beta)\in(\lambda^*(s)+s,+\infty)\times (\lambda^*(s),+\infty)$ (obvious from the definition of $\lambda^*$);
\item existence for $(\alpha,\beta)=(\lambda^*(s)+s,\lambda^*(s))$ whenever $\alpha>\lambda_1^a(p)$ and $\beta>\lambda_1(q)$ (Theorem \ref{onthecurve}).
\end{itemize}

It is worth noticing that if $\inf_\Omega a > 0$ we have non-existence for $(\alpha,\beta)\in[\tilde{s}_+,+\infty)\times\{\lambda_1(q)\}$, while the validity of this non-existence result is an open problem when $\inf_\Omega a=0$; see Remarks \ref{openprob}-\ref{pqlaplacian}.

The curve $\mathcal{C}$ always touches the half-line $\mathcal{L}:=\{\lambda_1^a(p)\}\times(\lambda_1(q),+\infty)$, but the intersection point, depending on the relation between $p$ and $q$, may not belong to the line $\alpha-\beta=s^*_-$ (see \cite[Theorem 3.3]{BTPicone}); the truthfulness of this assertion is based on a specific Picone-type inequality (like \cite[Lemma 1]{I}). Incidentally, we point out a different Picone-type inequality obtained via hidden convexity in \cite{BF} (see \cite{M} for an application), that we will use in Lemma \ref{Picone2}.
 
\section{Preliminaries}
 Let $\R^N $ be the $N$-dimensional Euclidean space and $\Omega\subseteq\R^N$ be a bounded domain. With $B_r$ we indicate the generic ball of radius $r$; its measure will be denoted by $|B_r|$. We write $A\Subset \Omega$ to signify that the closure of the set $A\subseteq \R^N$ is contained in $\Omega$. \\
 Given $\gamma\in\R$, the symbol $\gamma_+:=\max\{\gamma,0\}$ stands for its positive part.

For any measurable $u:\Omega\to\R$ and $K\subseteq\Omega$, we denote by $\essinf_K u$ (resp., $\esssup_K u$) the essential infimum (resp., supremum) of $u$ on $K$. We recall that $u>0$ in $\Omega$ means $\essinf_K u>0$ for all $K\Subset \Omega$. \\
We denote by $ C^\infty_c(\Omega) $ the space of test functions that are compactly supported in $ \Omega $, while $C^{1,\tau}(\overline{\Omega})$, $\tau\in(0,1]$, stands for the space of continuously differentiable functions having $\tau$-H\"older-continuous gradient. \\
For any $1<p<\infty$, the symbol $\|\cdot\|_p$ indicates the usual norm of $L^p(\Omega)$, while $\|\cdot\|_{1,p}$ denotes the standard equivalent norm on $W^{1,p}_0(\Omega)$ stemming from Poincaré's inequality, that is,
\begin{equation*}
\| u\|_{1,p}:=\|\nabla u\|_p\, \quad \mbox{for all} \;\; u\in W^{1,p}_0(\Omega).
\end{equation*}

As said before, the unbalanced growth of the double-phase operator requires the usage of Musielak-Orlicz and Musielak-Sobolev-Orlicz spaces. For an exhaustive presentation of these spaces, we refer the reader to the monograph \cite{HH}; see also \cite{FZ,CGHW}.
\begin{defi}
A function $\phi: \Omega\times [0, +\infty)\to [0, +\infty) $ is called generalized $\Phi$-function if 
\begin{itemize}
    \item $\phi(\cdot,t)$ is measurable for all $t\in[0,+\infty)$;
    \item $\phi(x,\cdot)$ is a Young function for a.e.\,$x\in\Omega$ (see \cite[Definition 3.2.1]{KJF}). 
\end{itemize}
The set of all generalized $\Phi$-functions will be denoted by $\Phi(\Omega).$
\end{defi}
Let $\phi \in \Phi (\Omega)$ be such that $\phi(x,\cdot)$ satisfy the $\Delta_2$ condition for a.e.\,$x\in\Omega$ (see \cite[Definition 2.2.5]{HH}). The Musielak–Orlicz space $L^{\phi}(\Omega)$ is defined as
 \begin{equation*}
		L^{\phi}(\Omega):=\{u:\Omega\to \R: \; u\; \text{is measurable and }\, \rho_{\phi}(u) <\infty\}
	\end{equation*}
 through the modular function $$\rho_{\phi}(u)= \int_{\Omega}\phi(x, |u|)\dx.$$
$L^\phi(\Omega)$ is a Banach space when equipped with the Luxembourg norm
	\begin{equation*}
		\|u\|_ {\phi} := \inf \left\{ \lambda>0: \, \rho_{\phi}\left(\frac{u}{\lambda}\right) \leq 1 \right\}.
	\end{equation*}
The Musielak–Sobolev-Orlicz space  $W^{1,\phi}(\Omega)$ is defined by  
	\begin{equation*}
		W^{1,\phi}(\Omega):=\{u\in L^{\phi}(\Omega): |\nabla u|\in L^{\phi}(\Omega)\},
	\end{equation*}
	equipped with the norm
	\begin{equation*}
		\|u\|_ {W^{1,\phi}(\Omega)}:= \|u\|_{\phi}+ \|\nabla u\|_{\phi} .
	\end{equation*}
 We also introduce the space $W^{1,\phi}_{0}(\Omega)$ as the completion of $C^{\infty}_{c}(\Omega)$ under the norm $\|\cdot\|_{W^{1,\phi}(\Omega)}$. The topological dual of $W^{1,\phi}_0(\Omega)$ will be denoted with $W^{1,\phi}_0(\Omega)^*$, while $\langle \cdot,\cdot \rangle$ stand for the duality brackets. In the sequel, we will make use of $\theta_0, \theta\in\Phi(\Omega)$ defined by
 \begin{equation*}
  \begin{split} 
  \theta_0(x, t):= a(x)t^p \quad \text{and} \quad 
  \theta(x, t):= a(x)t^p+ t^q \quad \text{for all } (x,t) \in \Omega \times [0, +\infty),
  \end{split}
 \end{equation*}
being $1<q<p<N$ and $a: \Omega \to [0, +\infty)$ as in \eqref{prob}. Since Poincaré's inequality holds true for $W^{1,\theta}_0(\Omega)$ (see \cite[p.200]{CGHW}), then
$$ \|u\|_{1, \theta}= \|\nabla u\|_{\theta}\quad \text{for all}\; u \in W^{1,\theta}_{0}(\Omega) $$
is an equivalent norm in $W^{1,\theta}_0(\Omega)$. \\
We recall the following relation between norms and modulars in $L^\theta(\Omega)$ (see \cite[Lemma 3.2.9]{HH}):
	\begin{equation}
		\label{embrelation}
		\min \{\|u\|_{\theta}^{p}, \|u\|_{\theta}^{q} \}\le \rho_{\theta}(u)\le \max \{\|u\|_{\theta}^{p}, \|u\|_{\theta}^{q} \} \quad \text{for all}\; u \in L^{\theta}(\Omega).
	\end{equation}
  We conclude the presentation of the functional setting with the following result, which is a consequence of the bound $p<q^*$.
	%
 \begin{prop}
 \label{wemb}
 The embedding $W^{1,q}_0(\Omega)\hookrightarrow L^\theta(\Omega)$ is compact.
 \end{prop}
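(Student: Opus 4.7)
The plan is to combine the Sobolev embedding theorem with the modular--norm equivalence \eqref{embrelation} inside $L^\theta(\Omega)$. The key point is that the growth exponent $p$ appearing in the weighted term of $\theta$ is controlled by the assumption $p<q^{*}$, so both summands in the modular are handled by Rellich--Kondrachov.

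First I would check that the embedding is well-defined. Since $a\in C^{0,1}(\overline{\Omega})$, the constant $M:=\|a\|_{\infty}$ is finite; hence for every measurable $u$ one has
\[
\rho_\theta(u)=\int_\Omega a|u|^p\dx+\int_\Omega|u|^q\dx \le M\|u\|_p^p+\|u\|_q^q.
\]
By the classical Sobolev embedding $W^{1,q}_0(\Omega)\hookrightarrow L^r(\Omega)$, which holds continuously for every $r\in[1,q^*]$, the right-hand side is finite whenever $u\in W^{1,q}_0(\Omega)$, because the hypothesis $p<q^*$ gives $\|u\|_p\lesssim\|u\|_{1,q}$. Thus $W^{1,q}_0(\Omega)\subseteq L^\theta(\Omega)$ with a continuous inclusion (using again \eqref{embrelation} to pass from modular bounds to norm bounds).

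Next I would establish compactness. Let $\{u_n\}\subseteq W^{1,q}_0(\Omega)$ be bounded. By Rellich--Kondrachov, and since both $q$ and $p$ lie strictly below $q^*$, a subsequence (not relabeled) converges in $L^p(\Omega)$ and in $L^q(\Omega)$ to some $u$. Applying the estimate above to $u_n-u$ yields
\[
\rho_\theta(u_n-u)\le M\|u_n-u\|_p^p+\|u_n-u\|_q^q\xrightarrow[n\to\infty]{}0.
\]
Then I would invoke \eqref{embrelation} to upgrade this modular convergence to norm convergence: if $\|u_n-u\|_\theta\ge\eta>0$ along some subsequence, then $\rho_\theta(u_n-u)\ge\min\{\eta^p,\eta^q\}>0$, a contradiction. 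Hence $\|u_n-u\|_\theta\to 0$ and the embedding is compact.

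There is no real obstacle here beyond checking the bookkeeping; the argument is essentially a transfer of Rellich--Kondrachov through the modular structure, and the crucial ingredient that enables the entire reduction is precisely the standing assumption $p<q^{*}$, which lets the Sobolev embedding control the weighted $p$-term in $\theta$.
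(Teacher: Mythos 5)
Your proof is correct and follows essentially the same route as the paper: Rellich--Kondrachov gives $L^p$ (hence $L^q$) convergence thanks to $p<q^{*}$, the boundedness of $a$ transfers this to the weighted term, and the modular--norm equivalence \eqref{embrelation} upgrades modular convergence to convergence in $L^\theta(\Omega)$. You are slightly more explicit than the paper about the last step, but the underlying argument is identical.
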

 \begin{proof}
 Take any bounded sequence $\{u_n\}\subseteq W^{1,q}_0(\Omega)$. Hence, according to Rellich-Kondrachov's theorem \cite[Theorem 9.16]{B}, there exists $u\in L^p(\Omega)$ such that $u_n \to u$ in $L^p(\Omega)$. Then the boundedness of $a$, ensuring that $L^p(\Omega)\hookrightarrow L^{\theta_0}(\Omega)$ continuously, and the continuity of the embedding $L^p(\Omega)\hookrightarrow L^q(\Omega)$ guarantee $u_n\to u$ in $L^\theta(\Omega)$.
 \end{proof}
 
Let $1<r<\infty$ and $w\in C^{0,1}(\overline{\Omega}) \cap A_p$ (see \eqref{Muck}) be a positive function. Consider the following weighted $r$-Laplacian eigenvalue problem:
\begin{equation}\label{proba}
		\left\{
		\begin{alignedat}{2}
			-\Delta_{r}^{w} v &= \lambda  w(x)|v|^{r-2}v &&\quad \mbox{in}\;\; \Omega,\\
   v &=0 &&\quad \mbox{on}\;\; \partial\Omega. 
		\end{alignedat}
		\right.
\end{equation}  
If $w\equiv 1,$ then \eqref{proba} reduces to the standard $r$-Laplacian eigenvalue problem; see \cite{Le}. When $w\equiv 1$, we will omit the superscripts $w$. The general case, encompassing $w=a$, was investigated in \cite{PPR}: in particular, \eqref{proba} admits a smallest eigenvalue $\lambda_{1}^w(r)$ which is positive, isolated, and simple, with corresponding positive eigenfunction $\phi_1^w(r)\in L^\infty(\Omega)$. These results heavily rely on the assumption $w\in A_p$, which makes Poincaré's inequality available: see \cite[Theorem 15.26, p. 307]{HKM} and \cite[Theorem 1]{HK}. If $\inf_\Omega w > 0$, then $ \phi_1^w(r)$ associated to $\lambda_1^w(r)$ belongs to $C^{1,\tau}(\overline{\Omega})$ for some $\tau\in(0,1]$; in this case we will assume $\|\phi_1^w(r)\|_{C^{1,\tau}(\overline{\Omega})}=1$. \\
Let us consider $\psi\in\Phi(\Omega)$ defined as
$$ \psi(x,t) := w(x)t^r \quad \text{for all } (x,t)\in\Omega\times[0,+\infty) $$
and the corresponding Musielak-Sobolev-Orlicz space $W^{1,\psi}(\Omega)$. Then the following variational characterization of the first eigenvalue holds true:
\begin{equation}\label{rayleigh}
    \lambda_1^{w}(r)= \inf_{v \in W^{1,\psi}_{0}(\Omega)\setminus\{0\} }\frac{\int_{\Omega}w|\nabla v|^r \dx}{\int_{\Omega}w|v|^r \dx}.
\end{equation}

The lack of $C^1$ regularity of solutions to \eqref{prob} prevents to use the classical strong maximum principle \cite[Theorem 1.1.1]{PS}; anyway, the weak Harnack inequality provided in \cite{BCM2} (see also \cite{BHHK}) allows us to recover this tool.
\begin{prop}
\label{strongmax}
Let $\Omega$ be a bounded domain. Suppose that $u\in W^{1,\theta}_0(\Omega)$ is a solution to
\begin{equation*}
\label{supersol}
-\Delta_p^a u - \Delta_q u \geq 0 \quad \text{in }\Omega.
\end{equation*}
Then either $u>0$ in $\Omega$ or $u\equiv 0$ in $\Omega$.
\end{prop}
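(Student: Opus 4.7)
My plan is to combine a direct sign argument with a classical open/closed connectedness trick based on the weak Harnack inequality of \cite{BCM2,BHHK} explicitly flagged by the authors. First I would test the supersolution inequality against the negative part $u_- := \max\{-u,0\} \in W^{1,\theta}_0(\Omega)$, which is admissible since $u_-\geq 0$ and the space is stable under truncations. Using $\nabla u_- = -\nabla u \, \chi_{\{u<0\}}$, the weak formulation collapses to
\[
-\int_{\{u<0\}} a(x)|\nabla u|^p\dx - \int_{\{u<0\}} |\nabla u|^q\dx \geq 0,
\]
which forces both integrals to vanish, hence $\nabla u_- = 0$ a.e. Since $W^{1,\theta}_0(\Omega)\hookrightarrow W^{1,q}_0(\Omega)$ (because $\theta(x,t)\geq t^q$), Poincaré's inequality yields $u_-\equiv 0$, so $u\geq 0$ in $\Omega$.

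Assuming $u\not\equiv 0$, I would then introduce the open set
\[
U := \{x\in\Omega : \essinf_{B_\rho(x)} u > 0 \text{ for some } \rho>0 \text{ with } B_\rho(x)\Subset\Omega\}
\]
and show that it is also closed in $\Omega$: given $x_n\in U$ with $x_n\to x_0\in\Omega$, I apply the weak Harnack inequality on $B_{2r}(x_0)\Subset\Omega$, namely
\[
\left(\frac{1}{|B_r(x_0)|}\int_{B_r(x_0)} u^\sigma\dx\right)^{1/\sigma} \leq C \essinf_{B_r(x_0)} u
\]
for some $\sigma, C > 0$; for $n$ large, $B_r(x_0)$ contains a ball where $u$ is bounded below by a positive constant, so the left-hand side is strictly positive and $x_0 \in U$. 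Connectedness of $\Omega$ then gives $U = \Omega$ or $U = \emptyset$, and the latter would force $u \equiv 0$ via a further application of Harnack on arbitrary balls, contradicting the hypothesis. A finite-subcover argument converts $U = \Omega$ into $\essinf_K u > 0$ for every $K \Subset \Omega$, which is exactly ``$u > 0$ in $\Omega$'' in the sense adopted in Section 2.

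The main obstacle is not the logical skeleton---which is the textbook open/closed connectedness argument---but the verification that the weak Harnack estimate of \cite{BCM2,BHHK} is indeed available for $W^{1,\theta}_0(\Omega)$-supersolutions under the standing hypotheses on $a$, $p$, $q$. The rest is standard, so the entire strong maximum principle ultimately reduces to quoting the correct version of that Harnack inequality.
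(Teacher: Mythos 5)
Your proof is correct, and it uses a genuinely different organizing principle than the paper. Both arguments begin identically (testing with $u_-$ to get $u\geq 0$) and both ultimately rest on the same weak Harnack inequality from \cite{BCM2,BHHK}. The difference lies in how positivity is propagated across $\Omega$. The paper builds an explicit Harnack chain: it picks a ball $B_r(x_0)$ where $\essinf u = 0$ and a ball $B_r(x_1)$ where $\esssup u > 0$, covers a continuous path from $x_0$ to $x_1$ with finitely many overlapping balls of the same radius, and then inductively applies the weak Harnack inequality on the concentric enlargements $B_{4r}$ to show $u\equiv 0$ on each ball in the chain, arriving at a contradiction at $B_r(x_1)$. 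You instead use the topological open/closed connectedness argument: the set $U$ of points where $u$ is locally bounded away from zero is clearly open, and the weak Harnack inequality makes it closed in $\Omega$; connectedness then gives $U\in\{\emptyset,\Omega\}$, and $U=\emptyset$ forces $u\equiv 0$ by another application of Harnack. The two strategies are classical equivalents for strong maximum principles, and each buys something: the chain argument is constructive and quantitative (it makes the radius $r$ and the number of balls explicit, which can matter if one later wants a quantitative lower bound), whereas your argument is shorter, requires no rearrangement of a finite subcover, and isolates the role of Harnack more transparently. One small note: you state the weak Harnack inequality with integral and $\essinf$ over the same ball $B_r$; the paper's version takes the integral over a larger concentric ball $B_{4r}$. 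Your form is the weaker of the two and follows from the paper's form up to a dimensional constant, so there is no gap, but if you intend to cite \cite[Theorem 3.5]{BCM2} or \cite[Corollary 1.5]{BHHK} directly you should check which normalization those references actually provide.
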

\begin{proof}
Testing with $u_-$, one has $u\geq 0$ in $\Omega$. Assume $u\not\equiv 0$ in $\Omega$, so there exists $H\Subset \Omega$ such that $\esssup_H u > 0$. By contradiction, suppose that there exists $K\Subset \Omega$ such that $\essinf_K u=0$. Take any sub-domain $\Omega'\Subset \Omega$ such that $H\cup K \Subset \Omega'$. Set $\delta:={\rm dist}(\Omega',\partial\Omega)>0$. Then, fixed any $r<\min\{{\rm dist}(H\cup K,\partial\Omega'),\frac{\delta}{18}\}$, there exist $x_0\in K$, $x_1\in H$ such that $B_r(x_0)\cup B_r(x_1) \Subset \Omega'$ and
\begin{equation}
\label{balls}
\essinf_{B_r(x_0)} u \leq \essinf_K u = 0 < \esssup_H u \leq \esssup_{B_r(x_1)} u.
\end{equation}

First we construct a Harnack chain (see, e.g., \cite[pp.164-165]{PS}). To this aim, let us consider a continuous function $\Gamma:[0,1]\to\Omega'$ joining $\Gamma(0):=x_0$ with $\Gamma(1):=x_1$, whose existence is guaranteed since $\Omega'$ is open and connected. We consider the following open covering of $\Gamma([0,1])$:
$$ \mathcal{B}:=\left\{B(x,r): \, x\in\Gamma([0,1])\right\}. $$
Then the compactness of $\Gamma([0,1])$ allows to extract a finite subcovering 
$$\mathcal{B}':=\{B_1,B_2,\ldots,B_M\}.$$
We can assume $M\geq 2$, $B_1=B_r(x_0)$, and $B_M=B_r(x_1)$. Rearranging the elements of $\mathcal{B}'$, we can also suppose $B_i\cap B_{i+1}\neq \emptyset$ for all $i=1,\ldots,M-1$.

Now we prove that
\begin{equation}
\label{induction}
\esssup_{B_i} u = 0 \quad \text{for all }i=2,\ldots,M.
\end{equation}
Since $B_1=B_r(x_0)$ and $u\geq 0$ in $\Omega$, by \eqref{balls} we have $\essinf_{B_1} u = 0$. Now suppose that $\essinf_{B_i} u = 0$ for some $i\in\{1,\ldots,M-1\}$ and consider the ball $\hat{B}_i:=B_{4r}$ concentric with $B_i$. Since $B_i$ and $B_{i+1}$ are not disjoint and have the same radius, we get $B_{i+1}\subseteq \hat{B}_i$. According to \cite[Theorem 3.5 and Section 6]{BCM2} (see also \cite[Corollary 1.5 and Remark 1.6]{BHHK}), it turns out that
\begin{equation*}
0 = \essinf_{B_i} u \geq c\left(\int_{\hat{B}_i} u^l \dx\right)^{\frac{1}{l}} \geq c\left(\int_{B_{i+1}} u^l \dx\right)^{\frac{1}{l}} \geq 0
\end{equation*}
for opportune $c,l>0$. Hence $u\equiv 0$ a.e.\,in $B_{i+1}$, that is, $\esssup_{B_{i+1}} u = 0$. In particular, $\essinf_{B_{i+1}} u = 0$. Reasoning inductively yields \eqref{induction}.

The proof is concluded by noticing that $B_M=B_r(x_1)$ and \eqref{induction} for $i=M$ contradict \eqref{balls}.
\end{proof}

We conclude this section by introducing the variational setting of \eqref{prob}. The energy functional $E_{\alpha, \beta}: W^{1, \theta}_{0}(\Omega)\to \R$ associated  with \eqref{prob} is 
\begin{equation*}
	E_{\alpha, \beta}(u):= \frac{1}{p}H_{\alpha}(u)+\frac{1}{q}G_{\beta}(u),
\end{equation*}
being
\begin{equation*}\begin{split}
	&H_{\alpha}(u)= \int_{\Omega}a|\nabla u|^p \dx-\alpha\int_{\Omega}a| u|^p \dx \quad \text{and} \quad G_{\beta}(u)= \int_{\Omega}|\nabla u|^q \dx- \beta \int_{\Omega}| u|^q \dx\quad 
 \end{split}
\end{equation*}
for all $u \in W^{1,\theta}_0(\Omega)$. We note that $E_{\alpha, \beta}$ is well-defined and of class $C^1.$ The set
\begin{equation*}\label{Neharidef}
    \mathcal{N}_{\alpha, \beta}:=\{ u\in  W^{1,\theta}_{0}(\Omega)\setminus\{0\}: \;\langle E_{\alpha, \beta}'(u),u\rangle =0\},
\end{equation*}
is called Nehari manifold associated with $E_{\alpha, \beta}.$ We say that $u \in W^{1,\theta}_0(\Omega)$ is a ground-state solution to \eqref{prob} if it is a global minimizer of $E_{\alpha,\beta}\mid_{\mathcal{N}_{\alpha,\beta}}$, that is the restriction of $E_{\alpha,\beta}$ to its Nehari manifold. \\
The following result, patterned after \cite[Proposition 6]{BT}, furnishes a sufficient condition to ensure $\mathcal{N}_{\alpha, \beta}\not= \emptyset.$
\begin{prop}
\label{extremepoint}
Let $v\in W^{1,\theta}_0(\Omega)\setminus\{0\}$ be such that $$H_{\alpha}(v) G_{\beta}(v)<0.$$ Then there exist a unique critical point $t(v)>0$ of $t\mapsto E_{\alpha, \beta}(tv)$ such that $t(v)v \in \mathcal{N}_{\alpha, \beta}$. Moreover, if $$G_{\beta}(v)<0<H_{\alpha}(v),$$ then $t(v)$ is the unique minimizer of $t\mapsto E_{\alpha, \beta}(tv)$ and $E_{\alpha, \beta}(t(v)v)<0$.
\end{prop}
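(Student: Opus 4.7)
The plan is to reduce everything to the analysis of the scalar \emph{fibering map} $\varphi(t) := E_{\alpha,\beta}(tv)$ on $[0,\infty)$. Exploiting the $p$-homogeneity of $H_\alpha$ and the $q$-homogeneity of $G_\beta$, I would first write
$$\varphi(t) = \frac{t^p}{p} H_\alpha(v) + \frac{t^q}{q} G_\beta(v), \qquad \varphi'(t) = t^{q-1}\bigl(t^{p-q} H_\alpha(v) + G_\beta(v)\bigr).$$
Setting $\varphi'(t)=0$ for $t>0$ yields $t^{p-q} = -G_\beta(v)/H_\alpha(v)$, which, under the hypothesis $H_\alpha(v)G_\beta(v)<0$, admits the unique positive solution
$$t(v) := \left(-\frac{G_\beta(v)}{H_\alpha(v)}\right)^{\frac{1}{p-q}}.$$
Since by homogeneity $\langle E_{\alpha,\beta}'(tv), tv\rangle = t\varphi'(t) = t^p H_\alpha(v)+t^q G_\beta(v)$, the vanishing of $\varphi'$ on $(0,\infty)$ is equivalent to $tv \in \mathcal{N}_{\alpha,\beta}$, and the first assertion follows.

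For the second part, assuming $G_\beta(v) < 0 < H_\alpha(v)$, I would compute the second derivative at $t(v)$ using the identity $t(v)^{p-q} H_\alpha(v) = -G_\beta(v)$ established above:
$$t(v)^2 \varphi''(t(v)) = (p-1) t(v)^p H_\alpha(v) + (q-1) t(v)^q G_\beta(v) = (q-p)\, t(v)^q G_\beta(v) > 0,$$
so $t(v)$ is a strict local minimizer of $\varphi$. Combined with the uniqueness of the positive critical point and with $\varphi(t)\to+\infty$ as $t\to+\infty$ (because $t^p$ dominates and $H_\alpha(v)>0$), this forces $t(v)$ to be the global minimizer of $\varphi$ on $[0,\infty)$. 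Finally, $\varphi(0)=0$ and the $q$-term dominance near the origin (since $q<p$ and $G_\beta(v)<0$) give $\varphi(t)<0$ for all sufficiently small $t>0$, whence $\varphi(t(v)) < 0$.

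I do not anticipate serious obstacles: the statement is essentially a reformulation of the classical fibering construction adapted to the $(p,q)$-type energy, and the homogeneities of $H_\alpha$ and $G_\beta$ reduce everything to elementary calculus on $(0,\infty)$. The only point requiring a little care is the correspondence between positive critical points of $\varphi$ and elements of $\mathcal{N}_{\alpha,\beta}$, but this is automatic from Euler's identity applied to the positively homogeneous functionals $H_\alpha$ and $G_\beta$.
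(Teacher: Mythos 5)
Your proof is correct and follows the standard fibering-map analysis; the paper states this proposition without proof, merely noting it is patterned after \cite[Proposition 6]{BT}, which uses exactly this argument, so you have in effect reconstructed the intended reasoning. All the key computations check out: the $p$- and $q$-homogeneity of $H_\alpha$ and $G_\beta$ give $\varphi'(t)=t^{q-1}(t^{p-q}H_\alpha(v)+G_\beta(v))$, Euler's identity correctly identifies the vanishing of $\varphi'$ with membership in $\mathcal{N}_{\alpha,\beta}$, the sign of $\varphi''(t(v))$ is computed correctly, and the sign analysis near $0$ and at infinity closes the second assertion.
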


Hereafter, when no confusion arises, we will reason up to sub-sequences and zero-measure sets.

\section{existence and  non-existence results}
\begin{thm}\label{notex}
Let $$(\alpha, \beta)\in \left((-\infty, \lambda_1^{a}(p)]\times  (-\infty, \lambda_1(q)]\right)\setminus \{(\lambda_1^{a}(p),\lambda_1(q))\}.$$ Then \eqref{prob} admits no non-trivial solutions. Moreover, \eqref{LI} holds true if and only if \GEV{$\lambda_1^a(p),\lambda_1(q)$} admits no non-trivial (or, equivalently, positive) solutions.
		
		\begin{proof}
		Let $\alpha\le \lambda_1^{a}(p)$ and $\beta\le \lambda_1(q)$ fulfill $(\alpha,\beta)\neq(\lambda_1^{a}(p),\lambda_1(q))$. Suppose by contradiction that there exists $u\in W^{1,\theta}_0(\Omega)\setminus\{0\}$ solution to \eqref{prob}. Testing \eqref{prob} with $u$ yields \begin{equation*}
				\rho_{\theta_0}(\nabla u)-\alpha\rho_{\theta_0}(u)=\beta \|u\|^{q}_{q}-\|\nabla u\|^{q}_{q}.
	\end{equation*}
		According to \eqref{rayleigh} we get
		\begin{equation*}
		0\le ( \lambda_1^{a}(p)-\alpha)\rho_{\theta_0}(u)\le \rho_{\theta_0}(\nabla u)-\alpha\rho_{\theta_{0}}(u)=\beta\|u\|^{q}_{q}-\|\nabla u\|^{q}_{q}\le (\beta-\lambda_1(q))\|u\|^{q}_{q}\le 0.
		\end{equation*}
	 Since $u\neq 0,$ it follows that $\alpha= \lambda_1^{a}(p)$ and $\beta= \lambda_1(q)$, contradicting the hypotheses.
  
  Now suppose $(\alpha,\beta)=(\lambda_1^{a}(p),\lambda_1(q))$. Notice that, according to the simplicity of $\lambda_1^a(p)$ and $\lambda_1(q)$, the intersection of the eigenspaces associated to these eigenvalues is trivial if and only if \eqref{LI} holds true. \\
  Consequently, repeating the argument above, if \eqref{LI} holds true then \GEV{$\lambda_1^a(p),\lambda_1(q)$} admits only the trivial solution. On the other hand, if \eqref{LI} is not fulfilled, then there exists a positive $u\in W^{1,\theta}_0(\Omega)$ satisfying both $-\Delta_p^a u = \lambda_1^a(p)u^{p-1}$ and $-\Delta_q u = \lambda_1(q)u^{q-1}$, which entails
  $$-\Delta_p^a u - \Delta_q u = \lambda_1^a(p)u^{p-1} + \lambda_1(q)u^{q-1}.$$
    Thus, $u$ is a non-trivial solution to \GEV{$\lambda_1^a(p),\lambda_1(q)$}.
		\end{proof}
	\end{thm}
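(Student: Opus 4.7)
My plan is to test the equation against the solution itself and combine the resulting identity with the Rayleigh-quotient characterization \eqref{rayleigh} of $\lambda_1^a(p)$ and $\lambda_1(q)$.

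For the non-existence part, I would suppose toward contradiction that $u \in W^{1,\theta}_0(\Omega)\setminus\{0\}$ solves \eqref{prob}. Since $\theta_0 \le \theta$ pointwise and $t^q \le \theta(x,t)$, such $u$ is admissible in both Rayleigh quotients. Testing the equation with $u$ and bounding the two gradient-side integrals via \eqref{rayleigh} yields
\begin{equation*}
(\lambda_1^a(p)-\alpha)\int_\Omega a|u|^p \dx + (\lambda_1(q)-\beta)\int_\Omega|u|^q \dx \le 0.
\end{equation*}
The coefficients on the left are non-negative by hypothesis, while the two integrals are strictly positive since $u\not\equiv 0$ and $a>0$ in $\Omega$; so both coefficients must vanish, forcing $(\alpha,\beta)=(\lambda_1^a(p),\lambda_1(q))$, contradicting the hypothesis.

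For the \emph{moreover} part, I would rerun the same computation at $(\alpha,\beta)=(\lambda_1^a(p),\lambda_1(q))$: any non-trivial $u$ must saturate both Rayleigh estimates, so by simplicity of the first eigenvalues it is simultaneously a scalar multiple of $\phi_p^a$ and of $\phi_q$. Under \eqref{LI} this is impossible, yielding non-existence. Conversely, if \eqref{LI} fails, there exists $k\in\R$ with $\phi_p^a=k\phi_q$; by the homogeneity of each eigenvalue equation, $u:=\phi_q>0$ then separately satisfies $-\Delta_p^a u = \lambda_1^a(p) u^{p-1}$ and $-\Delta_q u = \lambda_1(q) u^{q-1}$, and summing these delivers a positive, hence non-trivial, solution to \GEV{$\lambda_1^a(p),\lambda_1(q)$}. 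The equivalence between "non-trivial" and "positive" at this critical corner is automatic from the same rigidity: any non-trivial solution is a nonzero scalar multiple of a positive eigenfunction and therefore of constant sign.

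The only subtle step I anticipate is the rigidity one: it requires that equality in \eqref{rayleigh} forces $u$ to be a first eigenfunction, which, combined with the simplicity statement recalled in Section 2, pins $u$ up to a scalar. Everything else reduces to a direct computation exploiting the sign of $\lambda_1^a(p)-\alpha$ and $\lambda_1(q)-\beta$.
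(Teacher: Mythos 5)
Your proposal is correct and follows essentially the same route as the paper: testing with $u$, applying the Rayleigh-quotient characterization \eqref{rayleigh} of both first eigenvalues, concluding that the coefficients must vanish, and then at the corner $(\lambda_1^a(p),\lambda_1(q))$ invoking equality (hence minimality) in both Rayleigh quotients together with simplicity to force $u$ to lie in the intersection of the two first eigenspaces, which is trivial precisely when \eqref{LI} holds. The only cosmetic difference is that you assemble the two estimates into a single inequality $(\lambda_1^a(p)-\alpha)\rho_{\theta_0}(u)+(\lambda_1(q)-\beta)\|u\|_q^q\le 0$, whereas the paper displays the same information as a chain of inequalities; your explicit remark on why admissibility and positivity of both integrals hold, and why "non-trivial" and "positive" coincide at the corner, is a welcome slight sharpening of the write-up but not a different argument.
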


\begin{thm}
\label{normalize}
If \begin{equation*}
(\alpha, \beta)\in (-\infty, \lambda_1^{a}(p))\times  ( \lambda_1(q),+\infty),
\end{equation*} then there exists a positive solution $u\in W^{1,\theta}_0(\Omega)$ to \eqref{prob}, which is a global minimizer of $E_{\alpha,\beta}$.
\end{thm}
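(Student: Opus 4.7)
The plan is to apply the direct method of the calculus of variations on $W^{1,\theta}_0(\Omega)$: prove that $E_{\alpha,\beta}$ is coercive and weakly lower semicontinuous, extract a minimizer, show it is non-trivial by testing on a small multiple of $\phi_q$, and finally upgrade it to a positive solution via $u\mapsto|u|$ together with the strong maximum principle.

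\smallskip

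\textbf{Coercivity.} This is the main obstacle: in the unbalanced regime $\inf_\Omega a=0$ the $p$-growth part of $\rho_\theta$ can degenerate, while for $\beta>\lambda_1(q)$ the $q$-part alone fails to be coercive. I argue by contradiction. Assume $t_n:=\|u_n\|_{1,\theta}\to+\infty$ with $E_{\alpha,\beta}(u_n)$ bounded above, and set $v_n:=u_n/t_n$, so $\|v_n\|_{1,\theta}=1$. From $\alpha<\lambda_1^a(p)$ and \eqref{rayleigh} I extract $c_1>0$ such that $H_\alpha(w)\ge c_1\int_\Omega a|\nabla w|^p\dx$ for every $w$. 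The decomposition
$$E_{\alpha,\beta}(u_n)=\frac{t_n^p}{p}H_\alpha(v_n)+\frac{t_n^q}{q}G_\beta(v_n),$$
combined with boundedness of $|G_\beta(v_n)|$ (since $\{v_n\}$ is bounded in $W^{1,q}_0(\Omega)\hookrightarrow L^q(\Omega)$) and $q<p$, forces $H_\alpha(v_n)\to 0$, hence $\int_\Omega a|\nabla v_n|^p\dx\to 0$. By \eqref{embrelation}, $\|\nabla v_n\|_q$ is then bounded and remains bounded away from $0$; passing to a subsequence, $v_n\rightharpoonup v$ in $W^{1,q}_0(\Omega)$, with strong convergence in $L^q(\Omega)$ and $L^{\theta_0}(\Omega)$ via Proposition \ref{wemb}. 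Weak lower semicontinuity of the convex functional $w\mapsto \int_\Omega a|\nabla w|^p\dx$ gives $\int_\Omega a|\nabla v|^p\dx=0$, and since $a>0$ a.e.\,in $\Omega$ this forces $v\equiv 0$. Consequently $\|v_n\|_q\to 0$ whereas $\|\nabla v_n\|_q^q$ stays bounded below, so $G_\beta(v_n)=\|\nabla v_n\|_q^q-\beta\|v_n\|_q^q$ is eventually positive; combined with $H_\alpha(v_n)\ge 0$, this yields $E_{\alpha,\beta}(u_n)\ge \frac{t_n^q}{q}G_\beta(v_n)\to+\infty$, contradicting the standing assumption.

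\smallskip

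\textbf{Weak lower semicontinuity and existence of a non-trivial minimizer.} The maps $u\mapsto \int_\Omega a|\nabla u|^p\dx$ and $u\mapsto \|\nabla u\|_q^q$ are convex and norm-continuous on $W^{1,\theta}_0(\Omega)$, hence weakly l.s.c., while $-\frac{\alpha}{p}\int_\Omega a|u|^p\dx$ and $-\frac{\beta}{q}\|u\|_q^q$ are weakly continuous thanks to the compact embeddings $W^{1,\theta}_0(\Omega)\hookrightarrow L^{\theta_0}(\Omega), L^q(\Omega)$ derived from Proposition \ref{wemb}. Coercivity and w.l.s.c.\,on the reflexive space $W^{1,\theta}_0(\Omega)$ then yield a minimizer $u$ of $E_{\alpha,\beta}$, which is automatically a weak solution to \eqref{prob}. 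To ensure $u\not\equiv 0$ I test on $t\phi_q$: by \eqref{rayleigh}, $G_\beta(\phi_q)=(\lambda_1(q)-\beta)\|\phi_q\|_q^q<0$, so
$$E_{\alpha,\beta}(t\phi_q)=\frac{t^p}{p}H_\alpha(\phi_q)+\frac{t^q}{q}G_\beta(\phi_q)<0$$
for small $t>0$, because $p>q$ makes the negative $t^q$ contribution dominate.

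\smallskip

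\textbf{Positivity.} Since $|\nabla |u|\,|=|\nabla u|$ a.e.\ and the modulus leaves $\rho_{\theta_0}$ and $\|\cdot\|_q$ invariant, $E_{\alpha,\beta}(|u|)=E_{\alpha,\beta}(u)$, so replacing $u$ with $|u|$ I may assume $u\ge 0$ is a non-trivial weak solution. An application of Proposition \ref{strongmax} (possibly after absorbing the lower-order term $\alpha\, a\, u^{p-1}$ when $\alpha<0$ into a variant of the strong maximum principle tailored to the double-phase operator) delivers $u>0$ in $\Omega$.
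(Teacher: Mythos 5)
Your proof is correct and follows essentially the same strategy as the paper: direct method of the calculus of variations, with coercivity established by normalizing a hypothetically unbounded sequence, non-triviality tested on small multiples of $\phi_q$, and positivity via $|u|$ and the strong maximum principle. The main technical step — coercivity — is organized slightly more cleanly than in the paper. The paper splits into the two cases $\limsup_n\rho_{\theta_0}(\nabla v_n)>0$ and $\rho_{\theta_0}(\nabla v_n)\to 0$, deriving a contradiction in each; you instead observe up front that boundedness of $G_\beta(v_n)$, non-negativity of $H_\alpha(v_n)$, and $t_n\to\infty$ already \emph{force} $\rho_{\theta_0}(\nabla v_n)\to 0$, which collapses the two cases into one. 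Your identification of the weak limit $v\equiv 0$ via weak lower semicontinuity of $w\mapsto\int_\Omega a|\nabla w|^p\dx$ is an equivalent route to the paper's observation that $\rho_{\theta_0}(\nabla v_n)\to0$ means $v_n\to 0$ strongly in $W^{1,\theta_0}_0(\Omega)$, and then $\rho_\theta(\nabla v_n)=1$ forces $\|\nabla v_n\|_q^q\to 1$; both lead to $G_\beta(v_n)\to 1>0$ and a diverging $t_n^q$-term. One remark on the final step: you are right to flag that Proposition \ref{strongmax} as stated requires $-\Delta_p^a u-\Delta_q u\ge 0$, which for $\alpha<0$ does not follow immediately from $u\ge 0$; the paper applies the proposition directly without discussing this sign issue, so your caveat ("absorbing the lower-order term when $\alpha<0$'') is if anything more careful than the source. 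Apart from that, the argument is complete.
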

\begin{proof}
For any $t>0$ sufficiently small one has 
\begin{equation*} \begin{split}
  E_{\alpha, \beta}(t\phi_q)&=\frac{t^p}{p}\left(\rho_{\theta_0} (\nabla\phi_q)-\alpha \rho_{\theta_0} (\phi_q)\right)+\frac{t^q}{q}\left( \|\nabla\phi_q\|^{q}_q-\beta\|\phi_q\|^{q}_{ q}\right)\\ 
&= \frac{ t^p}{p}\left( \rho_{\theta_0} (\nabla\phi_q)-\alpha \rho_{\theta_0} (\phi_q)\right)+\frac{t^q}{q}\left( \lambda_1(q)-\beta\right)\|\phi_q\|^{q}_{ q}< 0.
\end{split}
\end{equation*} Let  $m:=\inf_{W^{1,\theta}_0(\Omega)} E_{\alpha, \beta}\in [-\infty,0)$ and $\{u_n\}\subseteq W^{1, \theta}_{0} (\Omega) $ be a minimizing sequence for $E_{\alpha,\beta}$. Since $m<0,$ we can assume that $E_{\alpha, \beta}(u_n)<0$ for all $n\in\N$.

\underline{Claim}: $\{u_n\}$ is bounded in $W^{1, \theta}_{0} (\Omega)$. \\
Suppose by contradiction that $\|u_n\|_{1,\theta}\to\infty$ and set $v_n:= \frac{u_n}{\|u_n\|_{1,\theta}}$ for all $n \in \N$. The boundedness of $\{v_n\}$ in $W^{1, \theta}_{0} (\Omega)$ and the compactness of $W^{1,\theta}_0(\Omega)\hookrightarrow L^\theta(\Omega)$ imply the existence of $v\in W^{1, \theta}_{0}(\Omega)$ such that  \begin{equation}\label{weaklimit}
    v_n \rightharpoonup v \quad \text{in } W^{1, \theta}_{0} (\Omega) \quad\text{and}\quad v_n \to v\quad \text{in } L^{\theta}(\Omega).
\end{equation}
By \eqref{rayleigh} we have, for all $n\in\N$,
\begin{equation}\label{limit}
\begin{split}
0&>  E_{\alpha, \beta}(u_n)=   E_{\alpha, \beta}(\|u_n\|_{1,\theta} v_n)\\&= \frac{1}{p}\|u_n\|_{1,\theta}^p(\rho_{\theta_0}(\nabla v_n)- \alpha \rho_{\theta_0}( v_n))+\frac{1}{q}\|u_n\|_{1,\theta}^q(\|\nabla v_n\|_q^{q} -\beta \|v_n\|^{q}_{q})\\
&\ge \frac{1}{p}\left(1-\frac{\alpha_+}{\lambda^a_1(p)} \right)\rho_{\theta_0}(\nabla v_n)\|u_n\|_{1,\theta}^p +\frac{1}{q}\|u_n\|_{1,\theta}^q(\|\nabla v_n\|_q^{q} -\beta \|v_n\|^{q}_{q}).
\end{split}
\end{equation}
Exploiting \eqref{rayleigh} again, besides recalling that $\beta>\lambda_1(q)$ and $\|\nabla v_n\|_q^q\leq \rho_\theta(\nabla v_n)=1$ (see \eqref{embrelation}), one has
$$\|\nabla v_n\|_q^{q} -\beta \|v_n\|^{q}_{q} \geq -\left(\frac{\beta}{\lambda_1(q)}-1\right)\|\nabla v_n\|_q^q \geq 1-\frac{\beta}{\lambda_1(q)} \quad \text{for all }n\in\N.$$
Hence, if 
\begin{equation}\label{liminfnot0}
    \limsup_{n \to \infty}\rho_{\theta_0}(\nabla v_n)>0,
\end{equation} 
then \eqref{limit} yields (up to sub-sequences)
$$ 0 > \eps\|u_n\|_{1,\theta}^p+\frac{1}{q}\left(1-\frac{\beta}{\lambda_1(q)}\right)\|u_n\|_{1,\theta}^q \quad \text{for all }n\in\N, $$
being $\eps>0$ sufficiently small. Letting $n\to\infty$ leads to a contradiction. On the other hand, if \eqref{liminfnot0} is not satisfied, then $v_n\to 0$ in $W^{1, \theta_0}_{0}(\Omega)$. This implies, via \eqref{embrelation} (which guarantees that $\rho_\theta(\nabla v_n)=1$ if and only if $\|\nabla v_n\|_\theta=1$), that $\|\nabla v_n\|_q^q=1-\rho_{\theta_0}(\nabla v_n)\to 1$ as $n\to\infty$. Using also \eqref{weaklimit} one has $\|v_n\|_q^q\to 0$ when $n\to\infty$. Hence, for all $n\in\N$ large enough, one has $\|\nabla v_n\|_q^q-\beta\|v_n\|_q^q\geq\frac{1}{2}$, so that \eqref{limit} becomes
$$ 0 > \frac{1}{q}\|u_n\|_{1,\theta}^q(\|\nabla v_n\|_q^{q} -\beta \|v_n\|^{q}_{q}) \geq \frac{1}{2q}\|u_n\|_{1,\theta}^q, $$
which is a contradiction establishing the claim.

Boundedness of $\{u_n \}$ in $W^{1,\theta}_{0}(\Omega)$ ensures the existence of $u \in W^{1,\theta}_{0}(\Omega)$ such that $$u_n \rightharpoonup u \quad \text{in } W^{1,\theta}_{0}(\Omega)\quad \text{and}\quad u_n \rightarrow u \quad \text{in }  L^{\theta}(\Omega).$$ 
By the weak lower semi-continuity of $E_{\alpha,\beta}$, we have $$m=\lim_{n \to \infty} E_{\alpha, \beta}(u_n)\ge E_{\alpha, \beta}(u),$$
whence $m>-\infty$, while the minimality of $m$ entails $m=E_{\alpha, \beta}(u).$ In particular, $u$ is a solution to \eqref{prob}. Since $E_{\alpha, \beta}$ is even, we can assume $u\ge0$ in $\Omega$.  Moreover, since $m<0=E_{\alpha, \beta}(0)$, we infer  $u\neq 0$. Finally, Proposition \ref{strongmax} guarantees that $u>0$ in $\Omega$.
\end{proof}

\begin{thm}
\label{nehari}
Suppose \begin{equation*}
(\alpha, \beta)\in ( \lambda_1^{a}(p),+\infty)\times(-\infty, \lambda_1(q)).
\end{equation*} Then there exists a positive solution $u\in W^{1,\theta}_0(\Omega)$ to \eqref{prob}, which is a ground-state solution with positive energy.
\end{thm}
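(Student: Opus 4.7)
The plan is to invoke the Nehari method, since $E_{\alpha,\beta}$ is unbounded below in this regime and a direct minimization as in Theorem \ref{normalize} cannot work. First I would verify that $\mathcal{N}_{\alpha,\beta}\neq\emptyset$: combining the Rayleigh characterization \eqref{rayleigh} of $\lambda_1^a(p)$ with the density of $C^\infty_c(\Omega)$ in $W^{1,\theta_0}_0(\Omega)$ produces some $v\in W^{1,\theta}_0(\Omega)\setminus\{0\}$ with $H_\alpha(v)<0$, while $\beta<\lambda_1(q)$ forces $G_\beta(v)\geq c\|\nabla v\|_q^q>0$ via \eqref{rayleigh} for $\lambda_1(q)$; Proposition \ref{extremepoint} then yields $t(v)v\in\mathcal{N}_{\alpha,\beta}$.

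On $\mathcal{N}_{\alpha,\beta}$ the identity $H_\alpha(u)=-G_\beta(u)$ reduces the energy to
\begin{equation*}
E_{\alpha,\beta}(u)=\left(\frac{1}{q}-\frac{1}{p}\right)G_\beta(u)>0,
\end{equation*}
so setting $m:=\inf_{\mathcal{N}_{\alpha,\beta}}E_{\alpha,\beta}$ and $M:=m/(1/q-1/p)$, a minimizing sequence $\{u_n\}$ satisfies $G_\beta(u_n)\to M\geq 0$ and $H_\alpha(u_n)\to -M$. Coercivity on the manifold follows by the chain $G_\beta(u_n)$ bounded $\Rightarrow\|\nabla u_n\|_q$ bounded $\Rightarrow\rho_{\theta_0}(u_n)$ bounded (via Sobolev $W^{1,q}_0\hookrightarrow L^p$, using $p<q^*$ and $a\in L^\infty(\Omega)$) $\Rightarrow\rho_{\theta_0}(\nabla u_n)$ bounded (via the Nehari identity) $\Rightarrow\|u_n\|_{1,\theta}$ bounded (via \eqref{embrelation}). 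Proposition \ref{wemb} then supplies $u_n\rightharpoonup u$ in $W^{1,\theta}_0(\Omega)$ and $u_n\to u$ in $L^\theta(\Omega)$, whence $\rho_{\theta_0}(u_n)\to\rho_{\theta_0}(u)$ and $\|u_n\|_q\to\|u\|_q$.

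The two subtle steps are proving $m>0$ and preserving the Nehari constraint at the limit. For $m>0$ I would argue by contradiction: if $m=0$ then $G_\beta(u_n)\to 0$ propagates through the same chain to yield $\|u_n\|_{1,\theta}\to 0$. Setting $v_n:=u_n/\|u_n\|_{1,\theta}$ (so that $\rho_\theta(\nabla v_n)=1$) and dividing the Nehari identity by $\|u_n\|_{1,\theta}^q$ gives
\begin{equation*}
\|u_n\|_{1,\theta}^{p-q}H_\alpha(v_n)+G_\beta(v_n)=0;
\end{equation*}
since $\|u_n\|_{1,\theta}^{p-q}\to 0$ and $H_\alpha(v_n)$ is bounded, the same chain run on $v_n$ yields $\|\nabla v_n\|_q\to 0$, so $\rho_{\theta_0}(\nabla v_n)\to 1$ and (via $L^\theta$-compactness) $\rho_{\theta_0}(v_n)\to 0$; hence $H_\alpha(v_n)\to 1>0$, making both summands above eventually strictly positive---a contradiction. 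For the weak limit $u$: the case $u=0$ drags $E_{\alpha,\beta}(u_n)$ to $0<m$, so $u\neq 0$ and $G_\beta(u)>0$; weak lower semicontinuity of $H_\alpha$ and $G_\beta$ yields $H_\alpha(u)+G_\beta(u)\leq 0$, and if this inequality were strict then Proposition \ref{extremepoint} would furnish $t^*\in(0,1)$ with $t^*u\in\mathcal{N}_{\alpha,\beta}$ and
\begin{equation*}
E_{\alpha,\beta}(t^*u)=\left(\frac{1}{q}-\frac{1}{p}\right)(t^*)^qG_\beta(u)<m,
\end{equation*}
contradicting minimality. Hence $u\in\mathcal{N}_{\alpha,\beta}$ and $E_{\alpha,\beta}(u)=m$.

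Finally, the computation $\langle I'(u),u\rangle=(q-p)G_\beta(u)\neq 0$ (with $I(w):=\langle E'_{\alpha,\beta}(w),w\rangle$) certifies that $\mathcal{N}_{\alpha,\beta}$ is a natural constraint, so the Lagrange multiplier vanishes and $u$ weakly solves \eqref{prob}. By evenness of $E_{\alpha,\beta}$ I may replace $u$ with $|u|$ and assume $u\geq 0$, and Proposition \ref{strongmax} then yields $u>0$ in $\Omega$. I expect the main obstacle to be the lower bound $m>0$: the two-phase scaling of $E_{\alpha,\beta}$ is non-homogeneous, so the Nehari identity has to be exploited at two distinct powers of $\|u_n\|_{1,\theta}$ after a careful normalization, and the interplay between $\rho_{\theta_0}$ and $\|\cdot\|_q$ must be tracked throughout.
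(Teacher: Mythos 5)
Your proof is correct and follows the same overall Nehari-manifold scaffold as the paper (non-emptiness of $\mathcal{N}_{\alpha,\beta}$, coercivity on the manifold, $m>0$, passage to a minimizer, and the strong maximum principle), but several steps take a genuinely different and more self-contained route. For coercivity, you run a direct chain of implications through the Nehari identity, the embedding $W^{1,q}_0\hookrightarrow L^p$ (valid since $p<q^*$ and $a\in L^\infty$), and \eqref{embrelation}, whereas the paper argues by contradiction after normalizing by $\|\cdot\|_\theta$ and invokes Proposition \ref{wemb}; your chain is cleaner and avoids the renormalization. For $m>0$, you normalize by $\|\cdot\|_{1,\theta}$ and exploit $\rho_\theta(\nabla v_n)=1$ to obtain $H_\alpha(v_n)\to 1$, contradicting the Nehari relation $\|u_n\|_{1,\theta}^{p-q}H_\alpha(v_n)+G_\beta(v_n)=0$ since $G_\beta(v_n)>0$; the paper normalizes by $\|\cdot\|_\theta$ and contradicts $\|y_n\|_\theta=1$ via Proposition \ref{wemb}. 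Finally, where the paper cites \cite{GGP} for extracting the minimizer, you give a self-contained argument: weak lower semi-continuity together with $L^\theta$-compactness yields $H_\alpha(u)+G_\beta(u)\le 0$, and if the inequality were strict, the fibering map would give a projection $t^*u\in\mathcal{N}_{\alpha,\beta}$ with $t^*<1$ (since $(t^*)^{p-q}=-G_\beta(u)/H_\alpha(u)<1$ is equivalent to $H_\alpha(u)+G_\beta(u)<0$) and $E_{\alpha,\beta}(t^*u)=(\tfrac{1}{q}-\tfrac{1}{p})(t^*)^q G_\beta(u)<m$, a contradiction; the Lagrange-multiplier computation $\langle I'(u),u\rangle=(q-p)G_\beta(u)<0$ then shows $\mathcal{N}_{\alpha,\beta}$ is a natural constraint. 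One small imprecision: in the post-coercivity passage you attribute the convergence $u_n\rightharpoonup u$ in $W^{1,\theta}_0(\Omega)$ and $u_n\to u$ in $L^\theta(\Omega)$ to Proposition \ref{wemb}, but that proposition concerns the embedding $W^{1,q}_0\hookrightarrow L^\theta$; the relevant compactness is $W^{1,\theta}_0\hookrightarrow L^\theta$, used elsewhere in the paper without a label. Since $\{u_n\}$ is bounded in both spaces this is harmless, but the citation should be adjusted.
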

\begin{proof}
Firstly, we notice that Proposition \ref{extremepoint} ensures $\mathcal{N}_{\alpha, \beta}\not=\emptyset$.

\underline{Claim 1}: $E_{\alpha, \beta}\mid_{\mathcal N_{\alpha, \beta}}$ is coercive. \\
Reasoning as in \cite[Proposition 3.3]{GGP}, we suppose by contradiction that there exist $M>0$ and $\{u_n\}\subseteq \mathcal{N}_{\alpha, \beta}$ such that $\|u_n\|_{{1,\theta}} \to \infty$ and 
\begin{equation} \label{absurd}
\begin{split}
	  E_{\alpha, \beta}(u_n)\le M\quad \text{for all } n \in \N.
\end{split}\end{equation} 
Fix any $n\in\N$. Since $u_n\in\mathcal{N}_{\alpha, \beta}$, we have 
\begin{equation}\label{neharibelong}
\rho_{\theta_0}(\nabla u_n) + \|\nabla u_n\|_q^q  = \alpha \rho_{\theta_0}(u_n) + \beta\|u_n\|_q^q,
\end{equation}
so \eqref{absurd} can be rewritten as 
\begin{equation*}
\label{coercivity1}
\begin{split}
E_{\alpha, \beta}(u_n)=\left(\frac{1}{q}-\frac{1}{p} \right)(\|\nabla u_n\|_q^q-\beta\|u_n\|_q^q)\le M.
\end{split}
\end{equation*} Set $y_n:= \frac{u_n}{\|u_n\|_{\theta}}$. Recalling \eqref{rayleigh}, besides $ \beta<\lambda_1(q)$ and $q<p$, one has  
\begin{equation}\label{betacond}
0\le \frac{E_{\alpha, \beta}(u_n)}{\|u_n\|_\theta^q}=\left(\frac{1}{q}-\frac{1}{p} \right)(\|\nabla y_n\|_q^q-\beta\|y_n\|_q^q)\le\frac{M}{\|u_n\|_{\theta}^{q}}.
	\end{equation}
 Moreover, \eqref{neharibelong} ensures
 \begin{equation*}\label{nablatou}
     \rho_{\theta}(\nabla u_n)\le \max\{\alpha,\beta\} \rho_{\theta}(u_n) \quad \text{for all }n\in\N.
 \end{equation*}
 Hence, by Poincaré's inequality, $\|u_n\|_{1,\theta} \to \infty$ forces $\|u_n\|_{\theta} \to \infty.$ Thus, passing to the limit in \eqref{betacond} we get
\begin{equation} \label{qconvergece0}
\lim_{n\to\infty} (\|\nabla y_n\|_q^q-\beta\|y_n\|_q^q) = 0.
\end{equation}
Exploiting also \eqref{rayleigh}, along with $\beta<\lambda_1(q)$, one has 
\begin{equation}\label{w1q}
    0\le \left(1-\frac{\beta_+}{\lambda_1(q)}\right)\|\nabla y_n\|_q^q  \le \|\nabla y_n\|_q^q - \beta\|y_n\|_q^q \to 0 \quad \text{as }n\to\infty,
\end{equation}
 whence $ y_n\to 0$ in $W^{1,q}_{0}(\Omega).$  
Finally, according to Proposition \ref{wemb}, it turns out that $y_n \to 0$ in $ L^{\theta}(\Omega),$ contradicting $\|y_n\|_{\theta}=1$ for all $n\in \N$. We deduce the coercivity of $E_{\alpha, \beta}\mid_{\mathcal N_{\alpha, \beta}}$.

\underline{Claim 2}: $\displaystyle{m_{\alpha,\beta}:=\inf_{ \mathcal{N}_{\alpha, \beta}}E_{\alpha, \beta}}>0$. \\
Since $q<p$ and $\beta<\lambda_1(q),$ by \eqref{neharibelong} we have 
\begin{equation*}
\begin{split}
E_{\alpha, \beta}(u)=\left(\frac{1}{q}-\frac{1}{p} \right)(\|\nabla u\|_q^q -\beta\|u\|_q^q)>0 \quad \text{for all } u \in \mathcal{N}_{\alpha, \beta}.
\end{split}
\end{equation*}
Hence $m_{\alpha, \beta} \ge0$. To show that $m_{\alpha,\beta}>0$, we argue by contradiction. Let $\{u_n\}\subseteq \mathcal{N}_{\alpha, \beta}$ be such that $E_{\alpha,\beta}(u_n)\to 0$ when $n\to\infty$. Thus, reasoning as in \eqref{w1q}, we get $u_n \to 0$ in $W^{1,q}_{0}(\Omega).$ Moreover, the coercivity of  $E_{\alpha, \beta}\mid_{\mathcal N_{\alpha, \beta}}$ guarantees that $\{u_n\}$ is bounded in $W^{1, \theta}_{0}(\Omega)$, which compactly embeds into $L^\theta(\Omega)$. Hence
\begin{equation*}
    u_n\rightharpoonup 0 \quad \text{in } W^{1, \theta}_{0}(\Omega) \quad \text{and}\quad  u_n\rightarrow 0 \quad \text{in } L^{ \theta}(\Omega).
\end{equation*} 
Set $y_n:= \frac{u_n}{\|u_n\|_{\theta}}$ for all $n\in \N.$  Using $\beta<\lambda_1(q)$, \eqref{neharibelong}, \eqref{rayleigh}, and $\rho_{\theta_0} (y_n)\le 1 $ for all $n \in \N,$ we get  \begin{equation}
\label{yncontrol}
\begin{split}
	 0&\le\|\nabla y_n\|_q^q - \beta\|y_n\|_q^q =\| u_n\|_{\theta}^{p-q}\big[ \alpha \rho_{\theta_0}(y_n) - \rho_{\theta_0}(\nabla y_n)\big] \\
	& \le\| u_n\|_{\theta}^{p-q}  \left( \alpha  - \lambda_1^{a}(p)\right) \rho_{\theta_0}( y_n) \le  \| u_n\|_{\theta}^{p-q}\left( \alpha  - \lambda_1^{a}(p)\right).
\end{split}
\end{equation} Hence, recalling that  $u_n\to 0$ in $L^{\theta}(\Omega)$, $p>q$, and $\alpha>\lambda_1^a(p)$, it turns out that $$y_n\to 0\quad \text{in}\quad W^{1,q}_{0}(\Omega).$$ Owing to Proposition \ref{wemb} we infer $$y_n \to 0 \quad \text{in } L^{\theta}(\Omega),$$
contradicting $\|y_n\|_{\theta}=1$ for all $n \in \N$. Accordingly, $m_{\alpha,\beta}>0$.

Reasoning as in \cite[Proposition 3.5 and Theorem 3.6]{GGP} furnishes $u\in \mathcal{N}_{\alpha, \beta}$ such that $E_{\alpha, \beta}(u)=m_{\alpha,\beta}$ and $u$ is a (unconstrained) critical point of $E_{\alpha,\beta}$. Then the conclusion follows as in Theorem \ref{normalize}.
\end{proof}

\begin{thm}
\label{nehari2}
Assume that \eqref{LI} holds true. If \begin{equation*}\begin{split}
	      (\alpha, \beta)\in (\lambda_1^a(p), \tilde{s}_+)\times \{\lambda_1(q)\},
	 \end{split}
\end{equation*} then there exist a positive solution $u\in W^{1,\theta}_0(\Omega)$ to \GEV{$\alpha, \lambda_1(q)$}, which is a ground-state solution with positive energy.
\end{thm}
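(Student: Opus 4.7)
The plan is to follow the scheme of Theorem \ref{nehari}, replacing everywhere the strict inequality $\beta<\lambda_1(q)$ (no longer available, since now $\beta=\lambda_1(q)$) with the strict inequality $\alpha<\tilde{s}_+$.

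First I would verify $\mathcal{N}_{\alpha,\lambda_1(q)}\neq\emptyset$ via Proposition \ref{extremepoint}, by exhibiting $v\in W^{1,\theta}_0(\Omega)$ with $H_\alpha(v)<0<G_{\lambda_1(q)}(v)$. Since $C_c^\infty(\Omega)$ is dense in $W^{1,\theta_0}_0(\Omega)$, by \eqref{rayleigh} one can pick $v\in W^{1,\theta}_0(\Omega)\setminus\{0\}$ whose weighted Rayleigh quotient $\rho_{\theta_0}(\nabla v)/\rho_{\theta_0}(v)$ lies in $(\lambda_1^a(p),\alpha)$, so $H_\alpha(v)<0$. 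Such a $v$ cannot be a multiple of $\phi_q$, whose weighted Rayleigh quotient equals $\tilde{s}_+>\alpha$; hence the simplicity of $\lambda_1(q)$ yields $G_{\lambda_1(q)}(v)>0$.

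Coercivity of $E_{\alpha,\lambda_1(q)}|_{\mathcal{N}_{\alpha,\lambda_1(q)}}$ would be proved by mimicking Claim 1 of Theorem \ref{nehari}. Suppose, by contradiction, $\{u_n\}\subseteq\mathcal{N}_{\alpha,\lambda_1(q)}$ satisfies $\|u_n\|_{1,\theta}\to\infty$ and $E_{\alpha,\lambda_1(q)}(u_n)\leq M$. Setting $y_n:=u_n/\|u_n\|_\theta$ (so $\rho_\theta(y_n)=1$ by \eqref{embrelation}), the Nehari identity produces $\rho_\theta(\nabla u_n)\leq \max\{\alpha,\lambda_1(q)\}\rho_\theta(u_n)$, forcing $\|u_n\|_\theta\to\infty$; dividing the Nehari identity by $\|u_n\|_\theta^q$ then yields $\|\nabla y_n\|_q^q-\lambda_1(q)\|y_n\|_q^q\to 0$. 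Since $\{y_n\}$ is bounded in $W^{1,q}_0(\Omega)$, Proposition \ref{wemb} and the simplicity of $\lambda_1(q)$ ensure, along a sub-sequence, $y_n\to c\phi_q$ in $L^\theta(\Omega)$ with $c\neq 0$. Exploiting $H_\alpha(u_n)\leq 0$ on $\mathcal{N}_{\alpha,\lambda_1(q)}$, i.e.\ $\rho_{\theta_0}(\nabla y_n)\leq\alpha\rho_{\theta_0}(y_n)$, grants boundedness of $\{y_n\}$ in $W^{1,\theta}_0(\Omega)$. Passing to a further weakly convergent sub-sequence and using weak lower semi-continuity of the convex modular $\rho_{\theta_0}(\nabla\cdot)$, I would get
\begin{equation*}
|c|^p\tilde{s}_+\int_\Omega a\phi_q^p\dx = \rho_{\theta_0}(\nabla(c\phi_q)) \leq \liminf_n\rho_{\theta_0}(\nabla y_n) \leq \alpha\lim_n\rho_{\theta_0}(y_n) = \alpha|c|^p\int_\Omega a\phi_q^p\dx,
\end{equation*}
contradicting $\alpha<\tilde{s}_+$.

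For $m_{\alpha,\lambda_1(q)}:=\inf_{\mathcal{N}_{\alpha,\lambda_1(q)}}E_{\alpha,\lambda_1(q)}>0$ I would argue analogously. Take $\{u_n\}\subseteq\mathcal{N}_{\alpha,\lambda_1(q)}$ with $E_{\alpha,\lambda_1(q)}(u_n)\to 0$; coercivity gives a strong $L^\theta$-limit $u$. If $u\neq 0$, then $G_{\lambda_1(q)}(u_n)\to 0$ together with weak lower semi-continuity and strong $L^q$-convergence forces $u=c\phi_q$ with $c\neq 0$; the Nehari identity delivers $\rho_{\theta_0}(\nabla u_n)\to\alpha|c|^p\int_\Omega a\phi_q^p\dx$, and weak lower semi-continuity applied to $\rho_{\theta_0}(\nabla\cdot)$ reproduces the contradiction $\tilde{s}_+\leq\alpha$. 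If instead $u=0$, then $\|u_n\|_\theta\to 0$ and I would re-normalize $y_n:=u_n/\|u_n\|_\theta$, repeating the coercivity-style argument above to reach the same contradiction. Once $m_{\alpha,\lambda_1(q)}>0$ is available, the existence of a minimizer on $\mathcal{N}_{\alpha,\lambda_1(q)}$ that is simultaneously an unconstrained critical point of $E_{\alpha,\lambda_1(q)}$ follows as in \cite[Proposition 3.5 and Theorem 3.6]{GGP}, and positivity is obtained combining evenness of $E_{\alpha,\lambda_1(q)}$ with Proposition \ref{strongmax}, exactly as in Theorems \ref{normalize}--\ref{nehari}. The main obstacle is converting the loss of the borderline inequality $\beta<\lambda_1(q)$ into a usable contradiction through the remaining strict inequality $\alpha<\tilde{s}_+$: this is precisely why one must carefully track $\rho_{\theta_0}(\nabla y_n)$ (resp.\ $\rho_{\theta_0}(\nabla u_n)$) along the normalized sequence and apply weak lower semi-continuity in $W^{1,\theta}_0(\Omega)$, where the required boundedness is guaranteed by the Nehari constraint.
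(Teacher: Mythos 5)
Your proposal follows the paper's proof closely in structure and key ideas: (i) non-emptiness of $\mathcal{N}_{\alpha,\lambda_1(q)}$ via Proposition \ref{extremepoint} by choosing a function with small weighted Rayleigh quotient, which cannot be a multiple of $\phi_q$ because $\alpha<\tilde{s}_+$; (ii) coercivity of $E_{\alpha,\lambda_1(q)}|_{\mathcal{N}_{\alpha,\lambda_1(q)}}$ by normalization, boundedness in $W^{1,\theta}_0(\Omega)$, and weak lower semi-continuity of $\rho_{\theta_0}(\nabla\cdot)$; (iii) positivity of $m_{\alpha,\lambda_1(q)}$ via the same dichotomy. Your variant in (ii) — using the pointwise bound $\rho_{\theta_0}(\nabla y_n)\le\alpha\rho_{\theta_0}(y_n)$ from $H_\alpha(u_n)\le 0$ directly, rather than first deriving $\rho_{\theta_0}(\nabla y_n)-\alpha\rho_{\theta_0}(y_n)\to 0$ from the energy bound — is sound and slightly leaner than the paper's. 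A small slip: the convergence $\|\nabla y_n\|_q^q-\lambda_1(q)\|y_n\|_q^q\to 0$ comes from the energy bound $E_{\alpha,\lambda_1(q)}(u_n)\le M$ rewritten via the Nehari identity and then divided by $\|u_n\|_\theta^q$, not from the Nehari identity alone.

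There is, however, a genuine gap in the final step. The passage from a minimizing sequence for $E_{\alpha,\lambda_1(q)}|_{\mathcal{N}_{\alpha,\lambda_1(q)}}$ to an actual minimizer that is simultaneously an unconstrained critical point, following \cite[Proposition 3.5 and Theorem 3.6]{GGP}, does not carry over verbatim from Theorem \ref{nehari}. When $\beta<\lambda_1(q)$ one has $G_\beta(v)>0$ for every $v\in\mathcal{N}_{\alpha,\beta}$, so the constraint is everywhere non-degenerate; in the borderline case $\beta=\lambda_1(q)$ this fails on multiples of $\phi_q$, and the argument of \cite{GGP} requires the weak limit $u$ of the minimizing sequence to satisfy $u\neq t\phi_q$ for all $t\in\R\setminus\{0\}$. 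The paper supplies this with a separate weak lower semi-continuity computation: if $u=t\phi_q$, then, using \eqref{neharibelong} with $\beta=\lambda_1(q)$ and \eqref{rayleigh},
\begin{equation*}
t^p\rho_{\theta_0}(\nabla\phi_q)+t^q\|\nabla\phi_q\|_q^q\le\liminf_{n\to\infty}\bigl(\rho_{\theta_0}(\nabla u_n)+\|\nabla u_n\|_q^q\bigr)=\lim_{n\to\infty}\bigl(\alpha\rho_{\theta_0}(u_n)+\lambda_1(q)\|u_n\|_q^q\bigr)\le\alpha t^p\rho_{\theta_0}(\phi_q)+t^q\|\nabla\phi_q\|_q^q,
\end{equation*}
whence $\rho_{\theta_0}(\nabla\phi_q)\le\alpha\rho_{\theta_0}(\phi_q)$, contradicting $\alpha<\tilde{s}_+$. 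Your proposal should include this verification before invoking \cite{GGP}.
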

\begin{proof}
The proof follows the argument used in Theorem \ref{nehari}.

\underline{Claim 1}: $\mathcal{N}_{\alpha, \lambda_1(q)}\not=\emptyset.$ \\
Recalling that $\alpha>\lambda_1^{a}(p)$ and $W^{1,\theta}_0(\Omega)$ is densely embedded into $W^{1,\theta_0}_0(\Omega)$ (see \cite[Theorem 2.23]{CGHW}), there exists $\bar{u}\in W^{1, \theta}_{0} (\Omega)\setminus \{0\}$  such that 
 \begin{equation}\label{notnehari}
     \alpha \rho_{\theta_0}(\bar{u}) > \rho_{\theta_0}(\nabla \bar{u}).
 \end{equation}
 Since $\alpha<\tilde{s} _{+}$, \eqref{notnehari} implies $\bar{u}\neq t \phi_q$ for any $t\in \R.$ 
 Thus,
 \begin{equation}
    \label{notnehari2}
    \|\nabla \bar{u}\|_q^q>\lambda_1(q)\|\bar{u}\|_q^q.
 \end{equation}
By \eqref{notnehari}--\eqref{notnehari2} we deduce $H_\alpha(\bar{u})<0<G_{\lambda_1(q)}(\bar{u})$, so Proposition \ref{extremepoint} proves the claim.

\underline{Claim 2}: $E_{\alpha, \lambda_1(q)}|_{\mathcal N_{\alpha, \lambda_1(q)}}$ is coercive.\\
Suppose by contradiction that there exist $M>0$ and $\{u_n\}\subseteq \mathcal{N}_{\alpha, \lambda_1(q)}$ such that $\|u_n\|_{1,\theta}\to\infty$ and
\begin{equation}
    \label{boundcoerc}
    E_{\alpha, \lambda_1(q)}(u_n)\le M \quad \text{for all }n\in\N.
\end{equation}
Set $y_n:= \frac{u_n}{\|u_n\|_{\theta}}$ for all $n\in\N$. Since $\{y_n\}$ is bounded in $L^\theta(\Omega)$, then $y_n\rightharpoonup y$ in $L^\theta(\Omega)$ for some $y\in L^\theta(\Omega)$. Reasoning as for \eqref{qconvergece0} we get
\begin{equation}
\label{Wbound1}
\lim_{n\to\infty} (\|\nabla y_n\|_q^q - \lambda_1(q) \|y_n\|_q^q) = 0.
\end{equation}
In particular, recalling that $\|y_n\|_q\leq \|y_n\|_\theta = 1$ for all $n\in\N$, \eqref{Wbound1} ensures that $\{y_n\}$ is bounded in $W^{1,q}_0(\Omega)$, whence $y_n\rightharpoonup y$ in $W^{1,q}_0(\Omega)$. Then the weak lower semi-continuity of $\|\cdot\|_q$ ensures
\begin{equation}
\label{Wbound2}
\|\nabla y\|_q^q - \lambda_1(q) \|y\|_q^q=0.
\end{equation}
Hence, only two cases can occur: \begin{itemize}
    \item $y_n \rightharpoonup 0$ in $W^{1,q}_{0}(\Omega),$ or
    \item $y_n \rightharpoonup t\phi_q$ in $W^{1,q}_{0}(\Omega)$ for some $t \in \R\setminus\{0\}$.
\end{itemize}
In the first case, Proposition \ref{wemb} forces $y_n\to 0$ in $L^{\theta}(\Omega)$, which contradicts   $\|y_n\|_{\theta}=1$ for all $n\in\N$. \\ 
In the second case, observe that \eqref{boundcoerc} can be rewritten via \eqref{neharibelong} (with $\beta=\lambda_1(q)$) as
$$ E_{\alpha, \lambda_1(q)}(u_n)=\left(\frac{1}{p}-\frac{1}{q} \right)(\rho_{\theta_0}(\nabla u_n)-\alpha\rho_{\theta_0}(u_n)) \leq M \quad \text{for all }n\in\N. $$
Then, dividing by $\|u_n\|_\theta^p$ gives
$$ 0\leq \frac{E_{\alpha, \lambda_1(q)}(u_n)}{\|u_n\|_\theta^p} = \left(\frac{1}{p}-\frac{1}{q} \right)(\rho_{\theta_0}(\nabla y_n)-\alpha\rho_{\theta_0}(y_n)) \leq \frac{M}{\|u_n\|_\theta^p} \quad \text{for all }n\in\N, $$
whence
$$\lim_{n\to\infty} (\rho_{\theta_0}(\nabla y_n)-\alpha\rho_{\theta_0}(y_n)) = 0.$$
An argument similar to \eqref{Wbound1}--\eqref{Wbound2} entails $y_n\rightharpoonup t\phi_q$ in $W^{1,\theta_0}(\Omega)$, $y_n \rightarrow t\phi_q$ in $L^{\theta_0}(\Omega),$ and $$ \rho_{\theta_0}(\nabla \phi_q) - \alpha\rho_{\theta_0}(\phi_q) \le 0,$$
which contradicts $\alpha<\tilde{s} _{+}.$
Therefore, $E_{\alpha, \lambda_1(q)}\mid_{\mathcal N_{\alpha, \lambda_1(q)}}$ is coercive. 

\underline{Claim 3}: $\displaystyle{m_{\alpha,\lambda_1(q)}:=\min_{ \mathcal{N}_{\alpha, \lambda_1(q)}} E_{\alpha, \lambda_1(q)}>0.}$ \\
Reasoning by contradiction as for Claim 2 of Theorem \ref{nehari}, there exists $\{u_n\}\subseteq \mathcal{N}_{\alpha,\lambda_1(q)}$ and $u\in W^{1,\theta}_0(\Omega)$ such that $u_n\rightharpoonup u$ in $W^{1,\theta}_0(\Omega)$ and
$$ \|\nabla u_n\|_q^q-\lambda_1(q)\|u_n\|_q^q \to 0 \quad \mbox{as} \;\; n\to\infty, $$
whence
$$ \|\nabla u\|_q^q-\lambda_1(q)\|u\|_q^q = 0. $$
Thus, either $u=0$ or $u=t\phi_q$ for some $t\in\R\setminus\{0\}$. In the first case, setting $y_n:=\frac{u_n}{\|u_n\|_\theta}$ and reasoning as in \eqref{yncontrol} yield \eqref{Wbound2}; then the conclusion follows as in Claim 2. On the other hand, if $u=t\phi_q$ then \eqref{neharibelong} gives
$$ \rho_{\theta_0}(\nabla u)-\alpha\rho_{\theta_0}(u) \leq \liminf_{n\to\infty} \left(\rho_{\theta_0}(\nabla u_n)-\alpha\rho_{\theta_0}(u_n)\right) = \lim_{n\to\infty} \left(\lambda_1(q)\|u_n\|_q^q-\|\nabla u_n\|_q^q\right) = 0, $$
so that $\alpha \geq \frac{\rho_{\theta_0}(\nabla u)}{\rho_{\theta_0}(u)}=\tilde{s}_+$, contradicting $\alpha<\tilde{s}_+$.

%
%
Let $\{u_n\}\subseteq \mathcal{N}_{\alpha,\lambda_1(q)}$ be a minimizing sequence for $E_{\alpha,\lambda_1(q)}\mid_{\mathcal{N}_{\alpha,\lambda_1(q)}}$. Coercivity of $E_{\alpha,\lambda_1(q)}\mid_{\mathcal{N}_{\alpha,\lambda_1(q)}}$ ensures that
$$ u_n\rightharpoonup u \quad \text{in }W^{1,\theta}_0(\Omega) \quad \text{and} \quad u_n\to u \quad \text{in }L^\theta(\Omega) $$
for some $u\in W^{1,\theta}_0(\Omega)$. The proof can be concluded as in Theorem \ref{nehari}, provided $u\neq t\phi_q$ for any $t\in\R\setminus\{0\}$, which is necessary to adapt the argument in \cite{GGP}. To this aim, suppose by contradiction $u=t\phi_q$ for some $t\in\R\setminus\{0\}$. Notice that the weak lower semi-continuity of $\|\cdot\|_q$, \eqref{neharibelong} for $\beta=\lambda_1(q)$, and \eqref{rayleigh} yield
\begin{equation*}
\begin{aligned}
t^p\rho_{\theta_0}(\nabla \phi_q) + t^q\|\nabla \phi_q\|_q^q &\leq \liminf_{n\to\infty} (\rho_{\theta_0}(\nabla u_n) + \|\nabla u_n\|_q^q) = \lim_{n\to\infty} (\alpha \rho_{\theta_0}(u_n) + \lambda_1(q)\|u_n\|_q^q) \\
&= \alpha t^p\rho_{\theta_0}(\phi_q) + \lambda_1(q)t^q\|\phi_q\|_q^q \leq \alpha t^p\rho_{\theta_0}(\phi_q) + t^q\|\nabla \phi_q\|_q^q.
\end{aligned}
\end{equation*}
Accordingly, $\rho_{\theta_0}(\nabla \phi_q) \leq \alpha \rho_{\theta_0}(\phi_q)$, contradicting $\alpha<\tilde{s}_+$.
\end{proof}

\begin{prop}
\label{epsilon}
Suppose that  \eqref{LI} holds true. Then  \begin{itemize}
    \item[(i)]  for any $\alpha\in[\lambda_1^a(p),\tilde{s}_{+})$ there exists $\eps_0=\eps_0(\alpha)>0$ such that \GEV{$\alpha,\lambda_1(q)+\eps$} has a positive solution $u\in W^{1,\theta}_0(\Omega)$  for all $\eps\in (0,\eps_0)$;
    \item[(ii)]  for any $\beta\in(\lambda_1(q),\tilde{s}_{-})$ there exists $\eps_0=\eps_0(\beta)>0$ such that \GEV{$\lambda_1^a(p)+\eps,\beta$} admits a positive solution $u\in W^{1,\theta}_0(\Omega)$ for all $\eps\in (0,\eps_0)$.
    \item[(iii)] there exists $\eps_0>0$ such that \GEV{$\lambda_1^a(p)+\eps,\lambda_1(q)+\eps$} admits a positive solution for all $\eps\in(0,\eps_0)$.
    \end{itemize} 
    In all cases, $u$ is a ground-state solution with negative energy.
\begin{proof}
The proofs are inspired by \cite[Section 4]{BT}. 

Let us prove (i). Fix any $\eps>0.$ Since $\alpha<\tilde{s}_{+}$ and $\eps>0$, we have \begin{equation}\label{halpha}\begin{split}
    H_{\alpha}(\phi_q)&>\rho_{\theta_0}(\nabla\phi_q) - \tilde{s}_{+}\rho_{\theta_0}(\phi_q) = 0
\end{split}
    \end{equation}
    and \begin{equation}\label{gbeta}
        G_{\lambda_1(q)+ \eps }(\phi_q)< G_{\lambda_1(q)}(\phi_q)=0.
    \end{equation} Proposition \ref{extremepoint}, besides \eqref{halpha}--\eqref{gbeta}, guarantees  $\mathcal{N}_{\alpha, \lambda_1(q)+\eps} \not = \emptyset$ and
    \begin{equation}
        \label{minnegative}
        m_{\alpha,\lambda_1(q)+\eps}:= \inf_{\mathcal{N}_{\alpha, \lambda_1{(q)+\eps} }}E_{\alpha, \lambda_1(q)+\eps}<0.
    \end{equation}
    Let $\{u_n^{\eps}\} \subseteq \mathcal{N}_{\alpha, \lambda_1{(q)+\eps}}$ be such that $E_{\alpha, \lambda_1(q)+\eps }(u_n^{\eps})\to m_{\alpha, \lambda_1(q)+\eps}$ as $n\to\infty$. According to \eqref{minnegative}, we can assume $$E_{\alpha, \lambda_1(q)+\eps}(u_n^\eps)<0 \quad \text{for all } n\in\N,$$  and consequently $G_{\lambda_1(q)+\eps}(u_n^\eps)<0<H_{\alpha}(u_n^\eps)$ for all $n\in\N$: indeed, recalling that $u_n^{\eps} \in \mathcal{N}_{\alpha, \lambda_1{(q)+\eps}}$ and $q<p$, one has 
\begin{equation*}\begin{split}
     \left(\frac{q-p}{pq}\right)H_{\alpha}(u_n^\eps)&=\left(\frac{p-q}{pq}\right)G_{\lambda_1(q)+\eps}(u_n^\eps)=\frac{1}{p}H_{\alpha}(u_n^\eps)+\frac{1}{q}G_{\lambda_1(q)+\eps}(u_n^\eps)\\&=E_{\alpha,\lambda_1(q)+\eps }(u_n^\eps)<0.
\end{split}
\end{equation*}

\underline{Claim}: there exists $\eps_0 \in (0,1)$ such that, for any $\eps\in(0,\eps_0)$, the sequence $\{u_n^{\eps}\}$ is bounded in $W^{1, \theta}_{0}(\Omega)$.\\
Suppose by contradiction that for every $k\in \N$ there exists $\eps_k \in \left(0, \frac{1}{k}\right)$ such that $u_k:=u_{{n}_k}^{\eps_k}\in \mathcal{N}_{\alpha, \lambda_1(q)+\eps_k }$ satisfies $t_k:=\left\|u_k\right\|_{1,\theta}>k.$ Set $v_k:=\frac{u_k}{\| u_k\|_{1,\theta}}$ for all $k \in \N$. Since $W^{1,\theta}_0(\Omega)\hookrightarrow L^\theta(\Omega)$ compactly, we have \begin{equation}\label{convergence}
v_k\rightharpoonup v^{*} \quad \text{in } W^{1, \theta}_{0}(\Omega) \quad \text{and} \quad v_k \rightarrow v^{*} \quad \text{in } L^{\theta}(\Omega).
\end{equation}

Let us show that $v^*\neq 0$.
Since $u_k\in \mathcal{N}_{\alpha, \lambda_1(q)+\eps_k }$ for all $k\in\N$, we have $$|H_{\alpha}( u_k)|=|G_{ \lambda_1(q)+\eps_k}( u_k)| \quad \text{for all }k \in \N.$$ Dividing by $t_k^p,$ we get  $$|H_{\alpha}( v_{k})|=t_k^{q-p}|G_{ \lambda_1(q)+\eps_k}( v_{k})| \quad \text{for all }k \in \N.$$ 
According to \eqref{convergence} and $\|\nabla v_k\|_q \leq \|v_k\|_{1,\theta} = 1$, besides $G_{\lambda_1(q)+\eps_k}(v_k)<0$ for every $k \in \N$, and letting $k \to \infty$ entail \begin{equation}\label{liminfHG}
	 \lim_{k\to \infty}H_{\alpha}(v_k)=0 \quad \text{and} \quad \limsup_{k\to \infty}G_{\lambda_1(q)+\eps_k}(v_k)\le0. 
	\end{equation} Thus, we infer $H_{\alpha}(v^{*})\le 0$ and $G_{\lambda_1(q)}(v^*)\le 0:$ indeed, by \eqref{convergence} and the weak lower semi-continuity of both $H_\alpha$ and $G_\beta$, \begin{equation}\label{H}
	H_{\alpha}(v^{*})\le \lim_{k\to \infty}H_{\alpha}(v_k)=0
\end{equation}
and
\begin{equation}\label{G}
G_{\lambda_1(q)}(v^*)\le \liminf_{k\to \infty}G_{\lambda_1(q)+\eps_k}(v_k)\le \limsup_{k\to \infty}G_{\lambda_1(q)+\eps_k}(v_k)\le 0.
\end{equation}
Since $\|v_k\|_{1,\theta}=1$ for all $k\in\N$, besides exploiting \eqref{convergence} and \eqref{liminfHG}--\eqref{H}, we deduce
\begin{equation}\begin{split}\label{vweight}
\alpha \rho_{\theta_0}(v^*) + \lambda_1(q)\|v^*\|_q^q &= \lim_{k\to\infty} (\alpha \rho_{\theta_0}(v_k) + (\lambda_1(q)+\eps_k)\|v_k\|_q^q) \\
&=1- \lim_{k\to \infty}(H_{\alpha}(v_k)+G_{\lambda_1(q)+\eps_k}(v_k))\\
&\ge 1- \lim_{k\to\infty} H_{\alpha}( v_{k})-\limsup_{k\to\infty}  G_{\lambda_1(q)+\eps_k}( v_{k})\ge 1.
\end{split}
\end{equation}
Thus $v^{*}\neq 0.$

By \eqref{rayleigh} we have $G_{\lambda_1(q)}(v^*)\geq 0$, so \eqref{G} yields $v^*= t \phi_q$ for some $t\in \R\setminus\{0\}.$ Accordingly, $\alpha<\tilde{s}_{+}$ forces $H_{\alpha}(t \phi_q)>0,$ which contradicts \eqref{H}. The claim is proved.

Fix any $\eps\in(0,\eps_0)$ and let $u_0^\eps \in W^{1, \theta}_0(\Omega)$ be such that $u_n^\eps\rightharpoonup u_0^\eps$ in $W^{1, \theta}_0(\Omega)$. Observe that $u_0^\eps\neq 0$, since the weak lower semi-continuity of $E_{\alpha,\lambda_1(q)+\eps}$ guarantees
$$E(u_0^\eps)\leq \lim_{n\to\infty} E_{\alpha, \lambda_1(q)+\eps}(u_n^\eps) = m_{\alpha, \lambda_1(q)+\eps}<0=E_{\alpha, \lambda_1(q)+\eps}(0).$$
Arguing as in \cite[Lemma 4]{BT}, we have
\begin{equation*}
     G_{\lambda_1(q)+\eps}(u_0^\eps)<0< H_{\alpha}( u_0^\eps) \quad \text{for all } \eps \in (0, \eps_0),
\end{equation*}
taking a smaller $\eps_0$ if necessary. Then, following \cite[Proposition 7 and Lemma 5]{BT}, we deduce $u_0^\eps \in \mathcal{N}_{\alpha, \lambda_1{(q)+\eps} }$ and that the minimizer $m_{\alpha,\lambda_1(q)+\eps} $ of $E_{\alpha,\lambda_1(q)+\eps}\mid_{\mathcal{N}_{\alpha,\lambda_1(q)+\eps}}$ is attained in $u_0^\eps$ for any $\eps \in (0, \eps_0).$ Reasoning as in Theorem \ref{nehari}, we conclude that $u_0^\eps$ is a positive solution to \GEV{$\alpha,\lambda_1(q)+\eps$} for any $\eps \in (0, \eps_0)$.

The proof of (ii) is analogous to (i), except for the last part of the claim; thus, we discuss only how to reach the contradiction establishing the claim. \\
Repeating verbatim the proof (i) until \eqref{vweight}, we get $v^*\neq 0$, $H_{\lambda_1^a(p)}(v^*)\leq 0$, and $G_\beta(v^*)\leq 0$. Since $H_{\lambda_1^a(p)}(v^*)\geq 0$ by \eqref{rayleigh}, we deduce $v^*=t\phi_1^a(p)$ for some $t\in\R\setminus\{0\}$. If $\tilde{s}_-<+\infty$, then $G_\beta(t\phi_1^a(p))\leq 0$, contradicting $\beta<\tilde{s}_-$; otherwise, $v^*=t\phi_1^a(p)$ forces $\phi_1^a(p)\in W^{1,\theta}_0(\Omega)$, against the assumption that $\int_\Omega |\nabla \phi_1^a(p)|^q \dx$ is not finite.

To prove (iii), observe that there exists $\eps_0>0$ such that
$$ G_{\lambda_1(q)+\eps_0}(\phi_q) < 0 < H_{\lambda_1^a(p)+\eps_0}(\phi_q). $$
Thus, arguing as in (i) produces a positive solution to \GEV{$\lambda_1^a(p)+\eps,\lambda_1(q)+\eps$} for any $\eps\in(0,\eps_0)$, restricting $\eps_0$ if necessary.
\end{proof}
\end{prop}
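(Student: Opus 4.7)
The plan is to follow the Nehari manifold approach of \cite[Section 4]{BT}, recast in the Musielak--Orlicz framework of \eqref{prob}. In each of (i)--(iii) I will: (a) exhibit a test function producing $\mathcal{N}_{\alpha,\beta} \neq \emptyset$ with $m_{\alpha,\beta} := \inf_{\mathcal{N}_{\alpha,\beta}} E_{\alpha,\beta} < 0$; (b) select a minimizing sequence $\{u_n^\eps\}$; (c) prove a uniform bound on $\{u_n^\eps\}$, which yields the required $\eps_0$; and (d) extract a weak limit and verify that it is a ground-state solution, following the scheme of Theorems \ref{normalize}--\ref{nehari}, with positivity coming from Proposition \ref{strongmax}.

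Step (a) uses the same test function $\phi_q$ in all three cases. The inequality $G_{\beta}(\phi_q) = (\lambda_1(q)-\beta)\|\phi_q\|_q^q < 0$ holds whenever $\beta > \lambda_1(q)$, which covers (i), (ii) and (iii). For the opposite sign, $H_\alpha(\phi_q) = (\tilde{s}_+ - \alpha)\rho_{\theta_0}(\phi_q) > 0$: directly from $\alpha < \tilde{s}_+$ in (i); from $\alpha = \lambda_1^a(p)+\eps < \tilde{s}_+$ (for small $\eps$) in (ii) and (iii), using $\lambda_1^a(p) < \tilde{s}_+$ granted by \eqref{LI}. Proposition \ref{extremepoint} then delivers a negative-energy Nehari point, so $m_{\alpha,\beta} < 0$, and a minimizing sequence $\{u_n^\eps\}$ can be selected with $E_{\alpha,\beta}(u_n^\eps) < 0$; the Nehari identity $H_\alpha(u_n^\eps) + G_\beta(u_n^\eps) = 0$ together with $q<p$ then forces $G_\beta(u_n^\eps) < 0 < H_\alpha(u_n^\eps)$.

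Step (c) is the principal obstacle. Arguing by contradiction as in Theorem \ref{nehari2}, suppose that for every $k \in \N$ there exist $\eps_k \to 0^+$ and $u_k$ in the relevant Nehari manifold with $\|u_k\|_{1,\theta} \to \infty$; normalize $v_k := u_k/\|u_k\|_{1,\theta}$. The Nehari identity rewritten as $|H_{\alpha_k}(v_k)| = \|u_k\|_{1,\theta}^{q-p}|G_{\beta_k}(v_k)|$, with $q<p$ and $G_{\beta_k}(v_k)$ bounded, yields $H_{\alpha_k}(v_k) \to 0$ and $\limsup_k G_{\beta_k}(v_k) \leq 0$. Extracting $v_k \rightharpoonup v^*$ weakly in $W^{1,\theta}_0(\Omega)$ and strongly in $L^\theta(\Omega)$, weak lower semi-continuity combined with $\eps_k\to 0$ gives $H_{\alpha_\infty}(v^*) \leq 0$ and $G_{\beta_\infty}(v^*) \leq 0$, where $(\alpha_\infty,\beta_\infty)$ is the limit of the varying parameters. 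Summing the definitions of $H_{\alpha_k}$ and $G_{\beta_k}$ on $v_k$, and invoking $\rho_\theta(\nabla v_k)=1$ via \eqref{embrelation}, one obtains $\alpha_\infty\rho_{\theta_0}(v^*) + \beta_\infty\|v^*\|_q^q \geq 1$, so $v^* \neq 0$. The contradiction then splits by case: in (i) and (iii), $\beta_\infty = \lambda_1(q)$ and \eqref{rayleigh} together with simplicity of $\lambda_1(q)$ force $v^* = t\phi_q$, against which $H_{\alpha_\infty}(v^*) = |t|^p(\tilde{s}_+ - \alpha_\infty)\rho_{\theta_0}(\phi_q) > 0$ (strictness from $\alpha<\tilde{s}_+$ in (i) and from \eqref{LI} in (iii)) contradicts $H_{\alpha_\infty}(v^*) \leq 0$; in (ii), $\alpha_\infty = \lambda_1^a(p)$ and the simplicity of $\lambda_1^a(p)$ produce $v^* = t\phi_1^a(p)$, which is impossible in $W^{1,\theta}_0(\Omega)$ when $\tilde{s}_-=+\infty$, while for $\tilde{s}_-<+\infty$ a direct computation gives $G_\beta(v^*) = |t|^q(\tilde{s}_- - \beta)\|\phi_1^a(p)\|_q^q > 0$, against $G_\beta(v^*) \leq 0$.

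For step (d), fix $\eps\in(0,\eps_0)$ and let $u_n^\eps \rightharpoonup u_0^\eps$ (weakly in $W^{1,\theta}_0(\Omega)$, strongly in $L^\theta(\Omega)$). Weak lower semi-continuity of $E_{\alpha,\beta}$ yields $E_{\alpha,\beta}(u_0^\eps) \leq m_{\alpha,\beta} < 0$, so $u_0^\eps \neq 0$. Adapting \cite[Lemma 4, Proposition 7, Lemma 5]{BT} (possibly shrinking $\eps_0$) verifies $G_\beta(u_0^\eps) < 0 < H_\alpha(u_0^\eps)$, whence $u_0^\eps \in \mathcal{N}_{\alpha,\beta}$ attains $m_{\alpha,\beta}$ and is an unconstrained critical point of $E_{\alpha,\beta}$. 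Evenness of $E_{\alpha,\beta}$ allows $u_0^\eps \geq 0$, and Proposition \ref{strongmax} upgrades this to $u_0^\eps > 0$, concluding the proof.
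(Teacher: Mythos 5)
Your proof is correct and follows essentially the same Nehari-manifold approach as the paper: same test function $\phi_q$ in Proposition \ref{extremepoint}, same normalization and contradiction argument for the uniform $\eps$-bound, same use of \eqref{rayleigh} and weak lower semi-continuity to force $v^*$ onto an eigenline, and the same dichotomy to derive the contradiction in each case. The only difference is organizational — you phrase (i)--(iii) uniformly via a varying sequence $(\alpha_k,\beta_k)\to(\alpha_\infty,\beta_\infty)$, whereas the paper spells out (i) and delegates (ii), (iii) as analogous modifications; the substance is identical.
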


\begin{prop}
\label{lambdaprop}
The function $\lambda^{*}$ defined in \eqref{definitionlambda} satisfies the following properties:
\begin{enumerate}[label={\rm (\roman*)}]
\item  $\lambda^{*}(s)<+\infty$ for any $s \in \R;$
\item  $\lambda^{*}(s)+s\ge \lambda_{1}^{a}(p)$ and  $\lambda^{*}(s)\ge \lambda_{1}(q)$ for all $s \in \R;$
\item   $\lambda^{*}(s)$ is non-increasing and  $\lambda^{*}(s)+s$ is non-decreasing on $\R;$
\item $\lambda^{*}(s)$ is continuous on $\R;$
\item   $\lambda^{*}(s)= \lambda_{1}(q)$ for any $s\ge s^*_{+};$
\item $\lambda^*(s^*)+s^*>\lambda_1^a(p)$ and $\lambda^{*}(s^{*})>\lambda_{1}(q)$ if and only if \eqref{LI} holds true.
\end{enumerate}
\begin{proof}
\textbf{(i)} Fix $\lambda,s\in \R$  and suppose that there exists  $u \in  W^{1, \theta}_{0}(\Omega)$  positive solution to \GEV{$\lambda+s,\lambda$}. Without loss of generality we can assume $\lambda$ so large that $\lambda+s>0$. Fix any $K\Subset\Omega$ and set $m_K:=\min_K a > 0$. Moreover, take any non-negative, non-trivial function $\phi\in C^\infty_c(K)$. Owing to Proposition \ref{strongmax} we have $\essinf_K u>0$, so
$$\xi:=\frac{\phi^p}{m_Ku^{p-1}+u^{q-1}}\in W^{1, \theta}_{0}(\Omega)$$
is admissible as test function in \GEV{$\lambda+s,\lambda$}. Therefore, recalling that $\lambda+s>0$,
\begin{equation}\label{test}
\begin{split}
&\int_{\Omega} a|\nabla u|^{p-2} \nabla u \cdot \nabla \xi\dx +\int_{\Omega} |\nabla u|^{q-2} \nabla u \cdot \nabla \xi\dx \\
&=(\lambda+s) \int_{\Omega}au^{p-1}\xi\dx+\lambda \int_{\Omega}u^{q-1}\xi\dx\\
&\ge (\lambda+s) m_{K} \int_{K} u^{p-1}\xi\dx+\lambda \int_{K}u^{q-1}\xi\dx\\
&= \lambda  \int_{K}\phi^p\dx+ s  \int_{K} \frac{ m_{K} u^{p-1}\phi^p}{m_K u^{p-1}+u^{q-1}}\dx  \ge  (\lambda-|s|)  \int_{K}\phi^p\dx.
\end{split}
\end{equation}
Moreover, a slight adaptation of \cite[Proposition 8]{BT} produces $\rho>0$ such that
\begin{equation}\label{picone}
\begin{split}
&\int_{\Omega} a|\nabla u|^{p-2} \nabla u \cdot \nabla \xi\dx +\int_{\Omega} |\nabla u|^{q-2} \nabla u \cdot \nabla \xi\dx\\
&\le \frac{1}{\rho} \left( \int_{K} a |\nabla \phi|^p\dx +\int_{K} |\nabla (\phi^{\frac{p}{q}})|^q\dx \right).
\end{split}
\end{equation}
Putting \eqref{test}--\eqref{picone} together gives
\begin{equation*}
\begin{split}
  \lambda \le \frac{1}{\rho \int_{K}\phi^p\dx} \left(\int_{K}a|\nabla \phi|^p\dx +\int_{K} |\nabla (\phi^{\frac{p}{q}})|^q\dx \right) + |s|.
\end{split}
\end{equation*}
Since $\phi$, $K$, and $s$ are independent of $\lambda$, we deduce that $\lambda^*(s)$ is finite.

\textbf{(ii)} Let $s\in \R.$ Three circumstances can occur. 
\begin{itemize}
\item If $s<s^*,$ then $\lambda_1(q)< \lambda_1^a(p)-s.$ Take any $\lambda\in (\lambda_1(q),\lambda_1^{a}(p)-s)$. Then Theorem \ref{normalize}  ensures that \GEV{$\lambda+s,\lambda$} admits a positive solution. Hence  $\lambda^*(s)>\lambda_1(q)$ and, by arbitrariness of $\lambda$, $\lambda^*(s)+s\ge\lambda^a_1(p)$.
\item Let $s>s^*.$  For any $\lambda\in (\lambda_1^{a}(p)-s,\lambda_1(q)),$ Theorem \ref{nehari}  provides the existence of a positive solution to \GEV{$\lambda+s,\lambda$}. Therefore  $\lambda^*(s)+s>\lambda^a_1(p)$ and, since $\lambda$ was arbitrary, $\lambda^*(s)\ge\lambda_1(q)$.
\item Let $s=s^*$. If \eqref{LI} holds true then, according to Proposition \ref{epsilon}(iii), \GEV{$\lambda_1^a(p)+\eps,\lambda_1(q)+\eps$} admits a positive solution provided $\eps$ is sufficiently small. Then $\lambda^*(s^*)+s^*>\lambda_1^a(p)$ and $\lambda^*(s^*)>\lambda_1(q)$. If \eqref{LI} does not hold, then Theorem \ref{notex} ensures the existence of a positive solution to \GEV{$\lambda_1^a(p),\lambda_1(q)$}, so $\lambda^*(s^*)+s^*\geq \lambda_1^a(p)$ and $\lambda^*(s^*)\geq \lambda_1(q)$.
\end{itemize}

\textbf{(iii)} Fix any $s,s'\in\R$ fulfilling $s<s'$. We prove that $\lambda^*(s')\le \lambda^{*}(s)$. From (ii) we have $\lambda^*(s'),\lambda^*(s)\geq \lambda_1(q)$. Thus, without loss of generality, we can suppose that $\lambda^*(s')>\lambda_1(q).$ Choose any $\eps>0$ satisfying $\lambda^*(s')-\eps>\lambda_1(q)$. Then there exists $\nu \in (\lambda^*(s')-\eps,\lambda^*(s'))$ such that \GEV{$\nu+s',\nu$} admits a positive solution $w \in W^{1, \theta}_0(\Omega)$.

Let  $\{\Omega_n\}$ be a sequence of $C^2$ bounded domains such that $\Omega_n\Subset\Omega$ and $\Omega_n \nearrow \Omega.$ For all $n \in  \N$, denote with $(\lambda_1(q, n),\phi_{q,n})$ the first eigenpair of $(-\Delta_q,W^{1,q}_0(\Omega_n))$, being $\|\phi_{q,n}\|_{L^\infty(\overline{\Omega}_n)}=1$ for all $n\in\N$. Each $\phi_{q,n}$  can be extended to $\Omega$ by setting $\phi_{q,n}\equiv 0$ outside $\Omega_n.$ Observe that
\begin{equation}
\label{eigencontinuity}
\lim_{n \to \infty}\lambda_1(q, n)=\lambda_1(q).
\end{equation}
Indeed, using \eqref{rayleigh} we get $\lambda_1(q,n)\ge \lambda_1(q)$ for all $n\in\N$. On the other hand, taking any $\{\xi_n\}\subseteq C^\infty_c(\Omega)\setminus\{0\}$ such that $\supp \xi_n \subseteq \Omega_n$ for all $n\in\N$ and $\xi_n \to \phi_q$ in $W^{1,q}_0(\Omega)$, we have
\begin{equation}
\label{undertheinf}
\lambda_1(q,n) = \frac{\|\nabla \phi_{q,n}\|_q^q}{\|\phi_{q,n}\|_q^q} \leq \frac{\|\nabla \xi_n\|_q^q}{\|\xi_n\|_q^q} \to \frac{\|\nabla \phi_q\|_q^q}{\|\phi_q\|_q^q} = \lambda_1(q) \quad \text{as }n\to\infty,
\end{equation}
ensuring $\lambda_1(q,n) \leq \lambda_1(q)$.

Thus we can choose $n\in\N$ such that $\lambda_1(q,n)<\nu$ and we set $m:=\essinf_{\Omega_n} w$, which is positive according to Proposition \ref{strongmax}. Thus, there exists $\overline{t}>0$ such that $w\geq m \geq \overline{t}\phi_{q,n}$ in $\Omega_n$. Let $T_w:\Omega\times \R\to \R$ be defined as $T_w(x,t):=\min\{t_+,w(x)\}$ and consider the following problem:
\begin{equation}
		\label{nuprob}
		\tag{${\rm P}_\nu$}
		\left\{
		\begin{alignedat}{2}
			-\Delta_{p}^a u -\Delta_{q}u &= (\nu+s)a (T_w(x,u))^{p-1} + \nu (T_w(x,u))^{q-1} &&\quad \mbox{in}\;\; \Omega, \\
			u &=0 &&\quad \mbox{on}\;\; \partial \Omega.
		\end{alignedat}
		\right.
	\end{equation}	
 The energy functional associated to \eqref{nuprob} is 
$$J(u):= \frac{1}{p}\rho_{\theta_0}(\nabla u) + \frac{1}{q}\|\nabla u\|_q^q - (\nu+s) \int_\Omega a F_p(x,u) \dx - \nu \int_\Omega F_q(x,u) \dx$$
for all $u \in W^{1,\theta}_0(\Omega),$ being $F_r(x,t):=\int_0^t (T_w(x,\tau))^{r-1} \dtau$ for $r=p,q$. Notice that $J$ is well defined and weakly sequentially lower semi-continuous. Notice that
\begin{equation*}
F_r(x,t) = \frac{1}{r} (T_w(x,t))^r + w(x)^{r-1}(t-w(x))_+.
\end{equation*}
By Young's inequality with $\sigma>0$ we get $F_r(x,t)\leq C_\sigma (w(x))^r + \sigma |t|^r $, so using also \eqref{rayleigh} yields
\begin{equation}
\label{trunccoerc}
\begin{aligned}
J(u) &\geq \frac{1}{p}\rho_{\theta_0}(\nabla u) + \frac{1}{q}\|\nabla u\|_q^q \\
&\quad - |\nu+s|C_\sigma \rho_{\theta_0}(w) - |\nu+s|\sigma \rho_{\theta_0}(u) - \nu C_\sigma \|w\|_q^q - \nu\sigma \|u\|_q^q \\
&\geq \left[\frac{1}{p}-\sigma\frac{|\nu+s|}{\lambda_1^a(p)}\right] \rho_{\theta_0}(\nabla u) + \left[\frac{1}{q}-\sigma\frac{\nu}{\lambda_1(q)}\right]\|\nabla u\|_q^q - C_\sigma \rho_\theta(w),
\end{aligned}
\end{equation}
enlarging $C_\sigma$ if necessary. Taking $\sigma>0$ small enough guarantees $J(u)\geq \sigma\rho_\theta(\nabla u)-C_\sigma \rho_\theta(w)$, so that $J$ is coercive. It follows that there exists $u\in W^{1,\theta}_0(\Omega)$ global minimizer of $J$. Notice that $F_r(x,t\phi_{q,n}(x))\geq \frac{t^r}{r}(\phi_{q,n}(x))^r\geq 0$ for all $(x,s)\in\Omega\times \R$ and all $t\in(0,\overline{t})$, whence
\begin{equation*}
\begin{aligned}
J(u)&\leq J(t\phi_{q,n}) \leq \frac{t^p}{p}\rho_{\theta_0}(\nabla \phi_{q,n}) + \frac{t^q}{q}\|\nabla \phi_{q,n}\|_q^q + |\nu+s|\frac{t^p}{p}\rho_{\theta_0}(\phi_{q,n}) - \nu\frac{t^q}{q} \|\phi_{q,n}\|_q^q \\
&\leq \frac{t^p}{p}\left(\rho_{\theta_0}(\nabla \phi_{q,n}) + |\nu+s|\rho_{\theta_0}(\phi_{q,n})\right) + \frac{t^q}{q} \left(\lambda_1(q,n)-\nu\right) \|\phi_{q,n}\|_q^q < 0
\end{aligned}
\end{equation*}
for all $t\in(0,\overline{t})$ sufficiently small. We deduce that $u\neq 0$. Since $\nu+s<\nu+s'$, the truncation levels $0$ and $w$ are sub- and super-solution to \eqref{nuprob}, respectively. Thus, testing with $u_-$ and $(u-w)_+$, we obtain $0\leq u\leq w$ in $\Omega$. By definition of $T_w$, $u$ is a non-negative, non-trivial solution to \GEV{$\nu+s,\nu$}. Owing to Proposition \ref{strongmax}, $u>0$ in $\Omega$. Accordingly, $u$ is a positive solution to \GEV{$\nu+s,\nu$}. Hence, from the definition of $\lambda^*,$ we get  $\nu\le \lambda^*(s).$ Recalling that  $\lambda^*(s')-\eps<\nu,$ we deduce $\lambda^*(s')<\lambda^*(s)+\eps$. Arbitrariness of $\eps$ permits to conclude. An argument analogous to \cite[Proposition 3(vi)]{BT} ensures the monotonicity of $\lambda^*(s)+s$.

\textbf{(iv)}  Continuity follows by the monotonicity proved in (iii), as in \cite[Proposition 3(v)]{BT}.

\textbf{(v)} Retaining the notation of (iii), we define $$s^{*}_{+,n}=\frac{ \rho_{\theta_{0}}( \nabla \phi_{q,n})}{\rho_{\theta_0} (\phi_{q,n})}-\lambda_1(q,n).$$

\underline{Claim}: $\displaystyle{\lim_{n\to \infty} s^{*}_{+,n}= s^{*}_{+}}.$ \\
According to \eqref{eigencontinuity}, it suffices to prove
\begin{equation}
\label{rayleighconv}
\lim_{n\to \infty} \frac{\rho_{\theta_0}(\nabla \phi_{q,n})}{\rho_{\theta_0}(\phi_{q,n})} = \frac{\rho_{\theta_0}(\nabla \phi_q)}{\rho_{\theta_0}(\phi_q)}.
\end{equation}

Fix any $n\in\N$. Since $\Omega$ and $\Omega_n$ are of class $C^2$, there exists a $C^2$-diffeomorphism $\Upsilon_n:\overline{\Omega}\to\overline{\Omega}_n$ such that $\|\Upsilon_n-I\|_{C^2(\overline{\Omega})} \to 0$ and $\|\Upsilon_n^{-1}-I_n\|_{C^2(\overline{\Omega}_n)} \to 0$ as $n\to\infty$, being $I$ and $I_n$ the identity functions on $\overline{\Omega}$ and $\overline{\Omega}_n$, respectively. Let us define $\tilde{\phi}_{q,n}:=\phi_{q,n}\circ \Upsilon_n$, which belongs to $C^{1,\tau}(\overline{\Omega})$. Moreover, since $\|\phi_{q,n}\|_{L^\infty(\overline{\Omega}_n)} = 1$ for all $n\in\N$, then  $\|\phi_{q,n}\|_{C^{1,\tau}(\overline{\Omega}_n)} \leq C$ for all $n\in\N$ by nonlinear regularity theory \cite{LI}, being $C>0$ opportune. As a consequence, $\{\tilde{\phi}_{q,n}\}$ is bounded in $C^{1,\tau}(\overline{\Omega})$. In particular, $\tilde{\phi}_{q,n}\to \hat{\phi}$ in $C^1(\overline{\Omega})$ for some $\hat{\phi}\in C^1(\overline{\Omega})\setminus\{0\}$, due to Ascoli-Arzelà's theorem and $\|\tilde{\phi}_{q,n}\|_{L^\infty(\overline{\Omega})}=1$ for all $n\in\N$ by construction. Thus, by uniform convergence,
\begin{equation*}
\begin{aligned}
\int_\Omega |\nabla \phi_{q,n}|^q \dx &= \int_\Omega |\nabla (\tilde{\phi}_{q,n}\circ \Upsilon_n^{-1})|^q \dx \\
&= \int_\Omega |[(\nabla \tilde{\phi}_{q,n})\circ \Upsilon_n^{-1}] \Jac(\Upsilon_n^{-1})|^q \dx  \to \int_\Omega |\nabla \hat{\phi}|^q \dx.
\end{aligned}
\end{equation*}
The same argument yields
\begin{equation*}
\int_\Omega \phi_{q,n}^q \dx \to \int_\Omega \hat{\phi}^q \dx > 0.
\end{equation*}
Hence, exploiting \eqref{undertheinf}, one has $\frac{\|\nabla \hat{\phi}\|_q^q}{\|\hat{\phi}\|_q^q} = \lambda_1(q)$, forcing $\hat{\phi}=t\phi_q$ for some $t>0$. Analogously,
\begin{equation*}
\begin{aligned}
\frac{\int_\Omega a|\nabla \phi_{q,n}|^p \dx}{\int_\Omega a|\phi_{q,n}|^p \dx} \to \frac{\int_\Omega a|\nabla \phi_q|^p \dx}{\int_\Omega a|\phi_q|^p \dx},
\end{aligned}
\end{equation*}
which entails \eqref{rayleighconv}.

Fixed any $s> s^*_{+},$ we prove that $\lambda^*(s)=\lambda_1(q).$ Taking into account (ii), we suppose by contradiction that there exists $s> s^*_{+}$ such that $\lambda^*(s)>\lambda_1(q).$ Take any $\eps \in (0,1)$ fulfilling $\lambda^*(s)-\eps >\lambda_1(q).$  By definition of $\lambda^{*}$, taking a smaller $\eps$ if necessary, \GEV{$\lambda^*(s)-\eps+s, \lambda^*(s)-\eps$} admits a positive solution $u\in W^{1, \theta}_{0}(\Omega).$ According to \eqref{eigencontinuity} and recalling that $s^{*}_{+,n}\to  s^{*}_{+}$, we can choose $n\in\N$ such that
\begin{equation}
\label{room}
\lambda^*(s)-\eps>\lambda_1(q,n)\quad \text{and} \quad s>s^{*}_{+,n}.
\end{equation}
Owing to Proposition \ref{strongmax}, one has $\essinf_{\Omega_n} u>0$, ensuring $\frac{\phi_{q,n}}{u}\in L^\infty(\Omega_n)$. Hence, testing \GEV{$\lambda^*(s)-\eps+s, \lambda^*(s)-\eps$} with $\frac{\phi_{q, n}^p}{u^{p-1}}\in W^{1,\theta}_0(\Omega)$, besides exploiting \eqref{room}, we deduce
\begin{equation}\label{deltaine}
\begin{split}
&\int_{\Omega} a|\nabla u|^{p-2} \nabla u \cdot \nabla \left(\frac{\phi_{q, n}^p}{u^{p-1}}\right)\dx +\int_{\Omega} |\nabla u|^{q-2} \nabla u \cdot \nabla \left(\frac{\phi_{q, n}^p}{u^{p-1}}\right)\dx\\
&=\left(\lambda^*(s)-\eps+s\right) \int_{\Omega_{n}}a\phi_{q, n}^p\dx +\left(\lambda^*(s)-\eps\right)\int_{\Omega_n} u^{q-1}  \frac{\phi_{q, n}^p}{u^{p-1}}\dx\\
&>\left(\lambda_1(q, n)+s\right)\int_{\Omega_{n}}a\phi_{q, n}^p\dx +\lambda_1(q,n)\int_{\Omega_{n}} u^{q-p}  \phi_{q,n}^p\dx.\\
\end{split}
\end{equation} 
On the other hand, Picone's inequalities \cite[Theorem 1.1]{A} and \cite[Lemma 1]{I} with $\gamma=q$ ensure 
\begin{equation}\label{piconeine}
\begin{split}
&\int_{\Omega} a|\nabla u|^{p-2} \nabla u \cdot \nabla \left(\frac{ \phi_{q, n}^p}{u^{p-1}}\right)\dx +\int_{\Omega} |\nabla u|^{q-2} \nabla u \cdot \nabla \left(\frac{ \phi_{q, n}^p}{u^{p-1}}\right)\dx\\
&\le \int_{\Omega_n} a|\nabla \phi_{q, n}|^{p} \dx +\lambda_1(q, n) \int_{\Omega_{n}}  \phi_{q, n}^p u^{q-p}\dx.\\
\end{split}
\end{equation}  
Combining \eqref{deltaine}--\eqref{piconeine} we get
\begin{equation*}
\left(\lambda_{1}(q, n)+s\right)\int_{\Omega_{n}} a\phi_{q, n}^{p} \dx<\int_{\Omega_{n}}a|\nabla \phi_{q, n}|^{p} \dx,
\end{equation*}
contradicting \eqref{room}. Hence $\lambda^*(s)=\lambda_1(q)$ for all $s>s^*_+$.

The continuity of $\lambda^{*}$ proved in (iv) forces $\lambda^{*}(s^{*}_{+})=\lambda_1(q).$

\textbf{(vi)} If \eqref{LI} holds true, the conclusion follows as in the proof of (ii). If \eqref{LI} does not hold, we suppose by contradiction that \eqref{prob} admits a positive solution for some $(\alpha,\beta)$ fulfilling
\begin{equation}
\label{contradiction}
\alpha>\lambda_1^a(p) \quad \text{and} \quad \beta>\lambda_1(q).
\end{equation}
Set $\tilde{s}_{+,n}:=\frac{\rho_{\theta_0}(\nabla \phi_{q,n})}{\rho_{\theta_0}(\phi_{q,n})}$ for all $n\in\N$. Since $\phi_q=k\phi_p^a$ for some $k\neq 0$, \eqref{rayleighconv} gives
$$ \lim_{n\to\infty} \tilde{s}_{+,n} = \frac{\rho_{\theta_0}(\nabla \phi_q)}{\rho_{\theta_0}(\phi_q)} = \frac{\rho_{\theta_0}(\nabla \phi_p^a)}{\rho_{\theta_0}(\phi_p^a)} = \lambda_1^a(p). $$
Then, reasoning as in \eqref{deltaine}--\eqref{piconeine}, we get
\begin{equation*}
\begin{aligned}
\alpha \int_\Omega a\phi_{q,n}^p \dx + \beta \int_\Omega \phi_{q,n}^p u^{q-p} \dx \leq \tilde{s}_{+,n} \int_\Omega a \phi_{q,n}^p \dx + \lambda_1(q,n) \int_\Omega \phi_{q,n}^p u^{q-p} \dx,
\end{aligned}
\end{equation*}
which forces either $\alpha\leq \tilde{s}_{+,n}$ or $\beta \leq \lambda_1(q,n)$. This contradicts \eqref{contradiction}: indeed, recalling that $\tilde{s}_{+,n}\to \lambda_1^a(p)$ and $\lambda_1(q,n)\to\lambda_1(q)$, we can choose $n\in\N$ such that $\alpha>\tilde{s}_{+,n}$ and $\beta>\lambda_1(q,n)$.
\end{proof}

\end{prop}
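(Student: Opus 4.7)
The plan is to treat the six properties in order, using the existence results from Theorems \ref{normalize}, \ref{nehari}, and Proposition \ref{epsilon} together with Picone-type inequalities. For (i), I would take any positive solution $u$ to \GEV{$\lambda+s,\lambda$}, fix $K\Subset\Omega$ with $m_K:=\min_K a >0$, choose a non-trivial $\phi\in C^\infty_c(K)$, and test against $\xi:=\phi^p/(m_K u^{p-1}+u^{q-1})$, which lies in $W^{1,\theta}_0(\Omega)$ thanks to Proposition \ref{strongmax}. The right-hand side gives a lower bound of the form $(\lambda-|s|)\int_K \phi^p\dx$, while a Picone-type inequality patterned on \cite[Proposition 8]{BT} bounds the left-hand side by a constant times $\int_K a|\nabla\phi|^p\dx+\int_K|\nabla(\phi^{p/q})|^q\dx$; rearranging yields an upper bound for $\lambda$ independent of $u$. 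For (ii) I split according to whether $s<s^*$, $s>s^*$, or $s=s^*$, invoking Theorem \ref{normalize} (resp.\,Theorem \ref{nehari}) to produce positive solutions for every $\lambda$ in an open neighbourhood of $\lambda_1(q)$ (resp.\,of $\lambda_1^a(p)-s$), and Proposition \ref{epsilon}(iii) or Theorem \ref{notex} in the boundary case $s=s^*$.

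For the monotonicity in (iii), given $s<s'$ and a positive solution $w$ to \GEV{$\nu+s',\nu$}, I would approximate $\Omega$ from inside by smooth domains $\Omega_n\Subset\Omega$, pick $n$ with $\lambda_1(q,n)<\nu$, and then minimize the truncated energy obtained from the right-hand side of \GEV{$\nu+s,\nu$} after replacing $u$ with $T_w(x,u):=\min\{u_+,w(x)\}$. Coercivity follows from Young's inequality, and evaluating on $t\phi_{q,n}$ (extended by zero) yields a negative value for small $t>0$, so a non-trivial minimizer exists. Standard tests with $u_-$ and $(u-w)_+$ confine it to $[0,w]$; being equal to its own truncation, it solves \GEV{$\nu+s,\nu$}, and Proposition \ref{strongmax} makes it positive, so $\nu\le\lambda^*(s)$. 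Monotonicity of $s\mapsto \lambda^*(s)+s$ is analogous. Property (iv) is then routine: a monotone function admits only jump discontinuities, and these are ruled out by using both monotonicity properties simultaneously, as in \cite[Proposition 3(v)]{BT}.

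For (v) I introduce $s^*_{+,n}:=\rho_{\theta_0}(\nabla\phi_{q,n})/\rho_{\theta_0}(\phi_{q,n})-\lambda_1(q,n)$ and first establish $s^*_{+,n}\to s^*_+$. Pulling back $\phi_{q,n}$ to $\overline\Omega$ via a $C^2$-diffeomorphism $\Upsilon_n$ with $\|\Upsilon_n-I\|_{C^2}\to 0$, nonlinear regularity \cite{LI} gives a uniform $C^{1,\tau}(\overline\Omega)$ bound on $\phi_{q,n}\circ\Upsilon_n$; by Ascoli--Arzel\`a and uniqueness of $\phi_q$ the limit is a positive multiple of $\phi_q$, and Lipschitz continuity of $a$ transports both Rayleigh quotients. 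Arguing by contradiction, if $s>s^*_+$ admitted $\lambda^*(s)>\lambda_1(q)$, I would pick a positive solution $u$ to \GEV{$\lambda+s,\lambda$} with $\lambda_1(q,n)<\lambda<\lambda^*(s)$ and $s>s^*_{+,n}$ for suitable large $n$, test against $\phi_{q,n}^p/u^{p-1}\in W^{1,\theta}_0(\Omega)$ (admissible by Proposition \ref{strongmax}), and apply Picone's inequality for the $p$-part \cite[Theorem 1.1]{A} and a Picone-type inequality for the $q$-part \cite[Lemma 1]{I}; comparing with the direct lower bound yields $(\lambda_1(q,n)+s)\rho_{\theta_0}(\phi_{q,n})<\rho_{\theta_0}(\nabla\phi_{q,n})$, contradicting $s>s^*_{+,n}$. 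The boundary value $\lambda^*(s^*_+)=\lambda_1(q)$ follows from the continuity in (iv). For (vi), the forward implication is immediate from Proposition \ref{epsilon}(iii); the converse exploits that a failure of \eqref{LI} gives $\phi_q=k\phi_p^a$, whence $\tilde s_{+,n}\to\lambda_1^a(p)$, and the same Picone manipulation applied at a hypothetical positive solution with $\alpha>\lambda_1^a(p)$ and $\beta>\lambda_1(q)$ forces either $\alpha\le\tilde s_{+,n}$ or $\beta\le\lambda_1(q,n)$ for large $n$, a contradiction.

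The principal obstacle is the Picone-based contradiction underlying (v) and (vi): I must simultaneously control the weighted Rayleigh quotient $\rho_{\theta_0}(\nabla\phi_{q,n})/\rho_{\theta_0}(\phi_{q,n})$, which involves the possibly degenerate weight $a$, and the standard Dirichlet eigenvalue $\lambda_1(q,n)$ on shrinking sub-domains, which is why the $C^{1,\tau}$-stable approximation of sub-domain eigenfunctions through the diffeomorphisms $\Upsilon_n$ is indispensable; moreover, the two separate Picone inequalities for the $p$- and $q$-parts must be combined so that the weighted and unweighted terms align precisely with the defining quantities of $s^*_+$ and $\tilde s_+$. The truncation argument in (iii), although geometrically clear, also requires care to pass from strict to non-strict monotonicity without losing information at the critical level $\lambda^*(s)$.
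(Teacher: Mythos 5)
Your proposal follows the paper's proof essentially step by step: the test function $\xi=\phi^p/(m_K u^{p-1}+u^{q-1})$ with the Picone bound from Bobkov--Tanaka for (i), the three-way split on $s$ versus $s^*$ for (ii), the sub/super-solution truncation $T_w$ on exhausting smooth subdomains $\Omega_n$ together with the strong maximum principle for (iii), monotonicity implying continuity for (iv), the diffeomorphism-based convergence $s^*_{+,n}\to s^*_+$ plus the combined Allegretto--Huang/Il'yasov Picone contradiction for (v), and the degenerate-eigenfunction Picone argument for (vi). This is the same route as the paper, with no meaningful deviation.
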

\begin{thm}\label{connessione}
Assume \eqref{LI} and set $s:=\alpha-\beta$. Then, for all $\alpha\in [\lambda_1^a(p), \tilde{s}_+)$ and $\beta\in(\lambda_1(q),\lambda^*(s))$, \eqref{prob} admits a positive solution.
\begin{proof}
Since $ \beta< \lambda^*(s)$, there exists $\nu \in (\beta,  \lambda^*(s))$ such that  \GEV{$\nu +s, \nu$} admits a positive solution $w\in W^{1, \theta}_{0}(\Omega),$ which is a super-solution to \eqref{prob}. the conclusion follows as in Proposition \ref{lambdaprop}(iii), recalling that $\beta>\lambda_1(q)$.
\end{proof}
\end{thm}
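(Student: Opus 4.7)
The plan is to adapt verbatim the truncation plus sub-super-solution scheme from Proposition \ref{lambdaprop}(iii). Since $\beta<\lambda^*(s)$, the definition \eqref{definitionlambda} of $\lambda^*$ provides $\nu\in(\beta,\lambda^*(s))$ such that \GEV{$\nu+s,\nu$} admits a positive solution $w\in W^{1,\theta}_0(\Omega)$. Using $s=\alpha-\beta$, the inequalities $\nu+s>\alpha$ and $\nu>\beta$ together with the positivity of $w$ show that $w$ is a super-solution of \eqref{prob}.

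For the sub-solution I would use the same approximation of $\Omega$ by smooth subdomains $\{\Omega_n\}$ with $\Omega_n\Subset\Omega$ and $\Omega_n\nearrow\Omega$ adopted in (iii). By \eqref{eigencontinuity} one has $\lambda_1(q,n)\to\lambda_1(q)<\beta$, so I can fix $n$ with $\lambda_1(q,n)<\beta$. Extending the first $q$-eigenfunction $\phi_{q,n}$ on $\Omega_n$ by zero and applying Proposition \ref{strongmax} to $w$ yields $m:=\essinf_{\Omega_n}w>0$; hence there exists $\overline{t}>0$ such that $\overline{t}\phi_{q,n}\leq w$ in $\Omega$.

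The final step is to repeat the direct-method argument of (iii): set $T_w(x,t):=\min\{t_+,w(x)\}$ and consider the truncated version of \eqref{prob} obtained by replacing $|u|^{p-2}u$ and $|u|^{q-2}u$ with $(T_w(x,u))^{p-1}$ and $(T_w(x,u))^{q-1}$. The associated energy $J$ is weakly lower semi-continuous and coercive via the Young-inequality computation \eqref{trunccoerc}, so the direct method supplies a global minimizer $u$. Evaluating $J$ at $t\phi_{q,n}$ for small $t\in(0,\overline t)$, where the truncation is inactive, gives
\[
J(t\phi_{q,n})\leq \frac{t^p}{p}\bigl(\rho_{\theta_0}(\nabla\phi_{q,n})+|\alpha|\rho_{\theta_0}(\phi_{q,n})\bigr)+\frac{t^q}{q}(\lambda_1(q,n)-\beta)\|\phi_{q,n}\|_q^q,
\]
which is negative for $t$ small because $\lambda_1(q,n)<\beta$ and $q<p$; hence $u\not\equiv 0$. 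Testing with $u_-$ and $(u-w)_+$, using that $0$ and $w$ are sub- and super-solution of the truncated problem respectively, yields $0\leq u\leq w$, so the truncation is inactive and $u$ solves \eqref{prob}. A final appeal to Proposition \ref{strongmax} upgrades $u$ to a positive solution.

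The whole argument is essentially cosmetic once (iii) is assumed; the only substantive point to check is that the condition $\lambda_1(q,n)<\beta$, granted directly by the hypothesis $\beta>\lambda_1(q)$, correctly plays the role that $\lambda_1(q,n)<\nu$ plays in (iii). I therefore expect no real obstacle beyond careful bookkeeping of the coercivity estimate, which must remain valid uniformly as $\alpha$ ranges over the bounded interval $[\lambda_1^a(p),\tilde{s}_+)$.
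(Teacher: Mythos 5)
Your proposal is correct and reproduces precisely the paper's intended argument: use the existence of a solution to \GEV{$\nu+s,\nu$} for some $\nu\in(\beta,\lambda^*(s))$ as a super-solution, then run the $\Omega_n$-approximation, truncation, and sub-super-solution machinery of Proposition \ref{lambdaprop}(iii), with $\beta>\lambda_1(q)$ playing the role that $\nu>\lambda_1(q,n)$ played there. The only cosmetic remark is that no uniformity of the coercivity estimate over $\alpha$ is needed, since $(\alpha,\beta)$ is fixed throughout.
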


To complete the picture, we discuss the existence of solutions on the curve $\mathcal{C}$ (see Figure \ref{specfig}). We premise a lemma, which is patterned after \cite[Lemma 8]{BT}.
\begin{lemma}
\label{Picone2}
Let $u \in W^{1, \theta}_{0}(\Omega)$ be positive a solution to \eqref{prob}. Then,  for any $ \phi \in W^{1, \theta}_{0}(\Omega) \cap L^\infty(\Omega)$ with $\phi \ge 0,$ one has 
 \begin{equation*}
\int_{\Omega}( \alpha a u^{p-q}+ \beta)\phi^q\dx\le \int_{\Omega} a|\nabla \phi|^q |\nabla u|^{p-q}\dx + \int_{\Omega} |\nabla \phi|^q\dx.
\end{equation*}
\begin{proof}
Take any $ \delta\in (0,1)$ and $\phi\in W^{1, \theta}_{0}(\Omega)$ with $\phi \ge 0.$ Set $\xi_\delta:=\frac{\phi^q}{(u+\delta)^{q-1}}\in W^{1, \theta}_{0}(\Omega)$, since $\frac{\phi}{u+\delta}\in L^\infty(\Omega)$. According to \cite[Proposition 2.9]{BF} we have
\begin{equation}
\label{hiddenconv}
\int_{\Omega} a |\nabla (u+\delta)|^{p-2} \nabla(u+\delta) \cdot \nabla \xi_\delta \dx \leq \int_\Omega a |\nabla\phi|^{q}|\nabla (u+\delta)|^{p-q} \dx.
\end{equation}
Testing \eqref{prob} with $\xi_\delta$, besides using \eqref{hiddenconv} and \cite{A}, yields
\begin{equation*}
\begin{split}
&\alpha\int_{\Omega}au^{p-1} \xi_\delta \dx +\beta \int_{\Omega} u^{q-1}  \xi_\delta \dx \\
&= \int_{\Omega} a|\nabla u|^{p-2} \nabla u \cdot \nabla \xi_\delta \dx +\int_{\Omega} |\nabla u|^{q-2} \nabla u \cdot \nabla \xi_\delta \dx\\
&= \int_{\Omega} a |\nabla (u+\delta)|^{p-2} \nabla(u+\delta) \cdot \nabla \xi_\delta \dx +\int_{\Omega} |\nabla (u+\delta)|^{q-2} \nabla (u+\delta) \cdot \nabla \xi_\delta \dx\\
&\le \int_\Omega a |\nabla\phi|^{q}|\nabla (u+\delta)|^{p-q} \dx + \int_{\Omega} | \nabla \phi|^{q}\dx\\
&= \int_\Omega a |\nabla\phi|^{q}|\nabla u|^{p-q} \dx + \int_{\Omega} | \nabla \phi|^{q}\dx.
\end{split}
\end{equation*}  
Hence Fatou's Lemma permits to conclude.
\end{proof}
\end{lemma}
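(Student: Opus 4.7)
The plan is to test the equation satisfied by $u$ against a suitable quotient-type function and then invoke two Picone-type inequalities of different flavors: one for the $p$-part (which requires the mixed-exponent ``hidden convexity'' version from \cite{BF}) and one for the $q$-part (for which the classical pointwise Picone inequality suffices).

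First, I would fix $\delta>0$ and introduce the regularized test function
\begin{equation*}
\xi_\delta:=\frac{\phi^q}{(u+\delta)^{q-1}}.
\end{equation*}
Since $\phi\in L^\infty(\Omega)$ and $u+\delta\geq \delta>0$ a.e.\ in $\Omega$, a direct computation of $\nabla\xi_\delta$ shows that $\xi_\delta\in W^{1,\theta}_0(\Omega)$ and is thus admissible as test function in \eqref{prob}. Testing gives
\begin{equation*}
\int_{\Omega} a|\nabla u|^{p-2}\nabla u\cdot\nabla \xi_\delta\,dx+\int_{\Omega}|\nabla u|^{q-2}\nabla u\cdot\nabla \xi_\delta\,dx=\alpha\int_{\Omega}au^{p-1}\xi_\delta\,dx+\beta\int_{\Omega}u^{q-1}\xi_\delta\,dx,
\end{equation*}
where I would replace $\nabla u$ by $\nabla(u+\delta)$ (they agree a.e.) in order to apply the Picone-type bounds on the shifted function.

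Next, I would bound each of the two gradient integrals from above. For the $p$-Laplacian term, the critical ingredient is the integral Picone-type inequality of Bobkov-Franzina \cite{BF} (explicitly quoted as \cite[Proposition 2.9]{BF} in the paper's notation), which yields
\begin{equation*}
\int_{\Omega} a|\nabla (u+\delta)|^{p-2}\nabla(u+\delta)\cdot\nabla\xi_\delta\,dx\leq \int_{\Omega}a|\nabla\phi|^q|\nabla(u+\delta)|^{p-q}\,dx=\int_{\Omega}a|\nabla\phi|^q|\nabla u|^{p-q}\,dx.
\end{equation*}
For the $q$-Laplacian term the exponents match, so the classical Allegretto-Huang pointwise Picone inequality gives $|\nabla(u+\delta)|^{q-2}\nabla(u+\delta)\cdot\nabla\xi_\delta\leq |\nabla\phi|^q$ a.e. Integrating and combining yields
\begin{equation*}
\alpha\int_{\Omega}au^{p-1}\xi_\delta\,dx+\beta\int_{\Omega}u^{q-1}\xi_\delta\,dx\leq \int_{\Omega}a|\nabla\phi|^q|\nabla u|^{p-q}\,dx+\int_{\Omega}|\nabla\phi|^q\,dx.
\end{equation*}

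Finally, I would let $\delta\to 0^+$. Pointwise, $u^{p-1}\phi^q/(u+\delta)^{q-1}\to u^{p-q}\phi^q$ and $u^{q-1}\phi^q/(u+\delta)^{q-1}\to \phi^q$, and both are monotone increasing (in absolute value) as $\delta\searrow 0$; Fatou's lemma therefore delivers the required lower bound on the left-hand side, while the right-hand side is $\delta$-independent. This yields the stated inequality. The main obstacle is the $p$-Laplacian bound: the standard pointwise Picone identity only produces a term of the form $|\nabla\phi|^p$, whereas the conclusion requires $|\nabla\phi|^q|\nabla u|^{p-q}$, with an exponent on $\phi$ that does not match the order of the operator. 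Getting around this is precisely where the hidden convexity technique of \cite{BF} does its work, and this is the crux of the proof.
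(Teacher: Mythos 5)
Your proposal is correct and follows essentially the same route as the paper: test with $\xi_\delta = \phi^q/(u+\delta)^{q-1}$, apply \cite[Proposition~2.9]{BF} (hidden convexity) to the weighted $p$-part and the classical Allegretto--Huang inequality to the $q$-part, then pass to the limit $\delta\to 0^+$ via Fatou. The only cosmetic difference is your observation that the integrands on the left are monotone in $\delta$, which would also allow monotone convergence in place of Fatou, but this changes nothing substantive.
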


\begin{thm}
\label{onthecurve}
Suppose \eqref{LI} to be satisfied and let $s\in\R$. If $\lambda^*(s)+s>\lambda_1^a(p)$ and $\lambda^*(s)>\lambda_1(q)$, then \GEV{$\lambda^*(s)+s, \lambda^*(s)$} has at least a positive solution.
\begin{proof}
Set $(\alpha,\beta):=(\lambda^*(s)+s,\lambda^*(s))$. By definition of $\lambda^*$ there exists $\{(\alpha_n, \beta_n)\}\subseteq\R^2$ such that  $\beta_n\to \beta$, $\lambda_1(q)<\beta_n<\beta,$ $\alpha_n= \beta_n +s$, and \GEV{$\alpha_n, \beta_n$} admits a positive solution $u_n \in W^{1,\theta}_{0}(\Omega)$ for all $n \in \N.$  By construction, $\alpha_n \to \alpha> \lambda^a_1(p)$. An adaption of \cite[Lemma 7]{BT} ensures the boundedness of $\{u_n\}$ in $W^{1, \theta}_{0}(\Omega).$ Thus, without loss of generality, we can assume that $\|u_n\|_{1,\theta}\le 1 $ for all $n\in\N$. The compactness of the embedding $W^{1,\theta}_0(\Omega)\hookrightarrow L^\theta(\Omega)$ furnishes $u\in W^{1,\theta}_0(\Omega)$ such that
\begin{equation}
\label{compact}
u_n\rightharpoonup u \quad \text{in }W^{1,\theta}_0(\Omega) \quad \text{and} \quad u_n\to u \quad \text{in }L^\theta(\Omega).
\end{equation}

Consider the functions $\Phi,\Psi:W^{1,\theta}_0(\Omega)\to\R$ defined as 
$$\Phi(u)=\rho_\theta(\nabla u) \quad \text{and} \quad \Psi_{\alpha,\beta}(u)=\alpha\rho_{\theta_0}(u)+\beta\|u\|_q^q \quad \text{for all }u\in W^{1,\theta}_0(\Omega).$$
Thus,
$$ \langle \Phi'(u),\phi \rangle = \int_\Omega a|\nabla u|^{p-2}\nabla u \cdot \nabla \phi \dx + \int_\Omega |\nabla u|^{q-2}\nabla u  \cdot \nabla \phi \dx $$
and
$$ \langle \Psi'_{\alpha,\beta}(u),\phi \rangle = \alpha \int_\Omega a u^{p-1}\phi \dx + \beta \int_\Omega u^{q-1}\phi \dx $$
for all $\phi\in W^{1,\theta}_0(\Omega)$. Now fix any $n\in\N$. Since $u_n\in W^{1,\theta}_{0}(\Omega)$ solves \GEV{$\alpha_n, \beta_n$}, then $\Phi'(u_n)-\Psi'_{\alpha_n,\beta_n}(u_n)=E'_{\alpha_n,\beta_n}(u_n)=0$ in $W^{1,\theta}_0(\Omega)^*$. Therefore,
\begin{equation*}
\begin{aligned}
\langle \Phi'(u_n),u_n-u\rangle &= \langle \Phi'(u_n)-\Psi'_{\alpha_n,\beta_n}(u_n),u_n-u\rangle + \langle \Psi_{\alpha_n,\beta_n}'(u_n),u_n-u \rangle \\
&= \langle E'_{\alpha_n,\beta_n}(u_n),u_n-u\rangle + \langle \Psi_{\alpha_n,\beta_n}'(u_n),u_n-u \rangle \\
&= \langle \Psi_{\alpha_n,\beta_n}'(u_n),u_n-u \rangle.
\end{aligned}
\end{equation*}
Owing to \eqref{compact} we deduce $\Psi_{\alpha_n,\beta_n}'(u_n)\to \Psi'_{\alpha,\beta}(u)$ in $W^{1,\theta}_0(\Omega)^*$. Thus,
$$ \lim_{n\to\infty} \langle \Phi'(u_n),u_n-u\rangle = \lim_{n\to\infty} \langle \Psi_{\alpha_n,\beta_n}'(u_n),u_n-u \rangle = 0. $$
By the ${\rm (S_+)}$ property of $\Phi'$ (see \cite[Theorem 3.3]{CGHW}) we have
\begin{equation}
\label{strongconv}
u_n\to u \quad \text{in }W^{1,\theta}_0(\Omega).
\end{equation}
Hence,
$$ E'_{\alpha,\beta}(u) = \lim_{n\to\infty} E'_{\alpha_n,\beta_n}(u_n) = 0. $$
Accordingly, $u$ is a critical point of $E_{\alpha,\beta}$.
 
It remains to show that $u\neq 0$. By contradiction, suppose $u=0$. Since $u_n$ is a positive solution to \GEV{$\alpha_n, \beta_n$}, using Lemma \ref{Picone2} with $\phi = \phi_q$ entails \begin{equation*}
\int_{\Omega}( \alpha_n a u_n^{p-q}+ \beta_n)\phi_q^q\dx\le \int_{\Omega} a |\nabla \phi_q|^q |\nabla u_n|^{p-q}\dx + \int_{\Omega} |\nabla \phi_q|^q\dx.
\end{equation*}
Letting $n\to\infty$ via Lebesgue's theorem and \eqref{strongconv} produces
 \begin{equation*}
\beta\int_{\Omega} \phi_q^q\dx\le \int_{\Omega} |\nabla \phi_q|^q\dx,
\end{equation*}
which contradicts $ \beta >\lambda_1(q).$ 
\end{proof}
\end{thm}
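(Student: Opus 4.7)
The plan is to approximate the target parameter pair $(\alpha,\beta) := (\lambda^*(s)+s,\lambda^*(s))$ from below along the line $\alpha-\beta=s$ and pass to the limit. Since $\lambda^*(s)>\lambda_1(q)$, I can pick a sequence $\beta_n \nearrow \beta$ with $\beta_n\in(\lambda_1(q),\beta)$; the definition of $\lambda^*$ in \eqref{definitionlambda} then guarantees that for each $n$ large enough, the problem \GEV{$\alpha_n,\beta_n$} with $\alpha_n:=\beta_n+s$ admits a positive solution $u_n\in W^{1,\theta}_0(\Omega)$. The desired solution to \GEV{$\lambda^*(s)+s,\lambda^*(s)$} should arise as a limit of $\{u_n\}$.

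First I would establish a uniform bound for $\{u_n\}$ in $W^{1,\theta}_0(\Omega)$, by adapting the $(p,q)$-Laplacian argument of \cite[Lemma 7]{BT} to the Musielak–Orlicz framework. Once this is in hand, the compact embedding in Proposition \ref{wemb} lets me extract a subsequence with $u_n\rightharpoonup u$ in $W^{1,\theta}_0(\Omega)$ and $u_n\to u$ in $L^\theta(\Omega)$. To upgrade to strong convergence in $W^{1,\theta}_0(\Omega)$, I would plug $u_n-u$ into the equation $E'_{\alpha_n,\beta_n}(u_n)=0$: the right-hand side $\alpha_n\int a u_n^{p-1}(u_n-u)+\beta_n \int u_n^{q-1}(u_n-u)$ vanishes in the limit thanks to the $L^\theta$-convergence, so $\langle \Phi'(u_n),u_n-u\rangle\to 0$, and the $({\rm S}_+)$ property of the double-phase operator (cf.\ \cite[Theorem 3.3]{CGHW}) yields $u_n\to u$ in $W^{1,\theta}_0(\Omega)$. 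Passing to the limit in the equation then gives $E'_{\alpha,\beta}(u)=0$, so $u\geq 0$ is a solution to \GEV{$\alpha,\beta$}; Proposition \ref{strongmax} promotes non-negativity to positivity once $u\not\equiv 0$ is known.

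The main obstacle is precisely ruling out $u\equiv 0$, since nothing in the energy estimate prevents the approximating solutions from collapsing. Here I plan to exploit the Picone-type inequality from Lemma \ref{Picone2}, applied to each $u_n$ with test function $\phi=\phi_q\in C^{1,\tau}(\overline{\Omega})\cap L^\infty(\Omega)$:
\begin{equation*}
\int_\Omega (\alpha_n a u_n^{p-q}+\beta_n)\phi_q^q\dx \leq \int_\Omega a|\nabla\phi_q|^q |\nabla u_n|^{p-q}\dx + \int_\Omega |\nabla\phi_q|^q\dx.
\end{equation*}
If $u\equiv 0$, then the strong convergence $u_n\to 0$ in $W^{1,\theta}_0(\Omega)$ together with dominated convergence (the integrands being controlled by $L^1$ functions since $p>q$ and $\phi_q\in L^\infty(\Omega)$) forces the left-hand side to tend to $\beta\int_\Omega\phi_q^q\dx$ while the first term on the right vanishes, leaving $\beta\int_\Omega \phi_q^q\dx\leq \int_\Omega|\nabla\phi_q|^q\dx=\lambda_1(q)\int_\Omega\phi_q^q\dx$ by \eqref{rayleigh}. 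This contradicts $\beta=\lambda^*(s)>\lambda_1(q)$.

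The delicate point in the above scheme is the uniform $W^{1,\theta}_0$-bound for $\{u_n\}$: in the balanced setting of \cite{BT} the argument rests on homogeneity and standard Sobolev norms, whereas here I must track separately the $\rho_{\theta_0}$ and $L^q$-gradient contributions using \eqref{embrelation}, the linear independence assumption \eqref{LI}, and the two-sided control $\lambda_1^a(p)<\alpha$, $\lambda_1(q)<\beta$ coming from the hypotheses $\lambda^*(s)+s>\lambda_1^a(p)$ and $\lambda^*(s)>\lambda_1(q)$. Once this is settled, the rest of the argument proceeds along the lines sketched above.
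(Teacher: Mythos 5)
Your proposal follows essentially the same route as the paper: approximate $(\alpha,\beta)$ from below along the line $\alpha-\beta=s$, invoke an adaptation of \cite[Lemma 7]{BT} for the uniform $W^{1,\theta}_0$-bound, upgrade weak to strong convergence via the ${\rm (S_+)}$ property by testing with $u_n-u$, and rule out $u\equiv 0$ by applying Lemma~\ref{Picone2} with $\phi=\phi_q$ to contradict $\beta>\lambda_1(q)$. The argument is correct and matches the paper's proof in both structure and the key ingredients used.
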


\begin{rmk}
\label{openprob}
We are not able to prove that there exist no solutions when $(\alpha,\beta)\in(\tilde{s}_+,+\infty)\times\{\lambda_1(q)\}$: this is mainly due to the lack of global $C^{1,\tau}$ regularity of solutions to \eqref{prob}. Indeed, in \cite[Proposition 4(ii)]{BT}, regularity allows to use $\frac{\phi_q^p}{u^{p-1}}$ as test function in \GEV{$\lambda_1(q)+s,\lambda_1(q)$}, being $s>s^*_+$. It is worth noticing that using either $\frac{\phi_{q,n}^p}{u^{p-1}}$ or $\frac{\phi_{q,n}^p}{(u+\delta)^{p-1}}$ (see Proposition \ref{lambdaprop}(v) and Lemma \ref{Picone2}, respectively) as test function gives rise to technical issues.
\end{rmk}

\begin{rmk}
\label{pqlaplacian}
We highlight that, whenever \eqref{LI} holds true and $\inf_\Omega a > 0$ (in particular for the $(p,q)$-Laplacian), non-existence of positive solutions when $(\alpha,\beta)=(\tilde{s}_+,\lambda_1(q))$ comes from the Picone identities in \cite{A,I}. Indeed, suppose by contradiction that there exists $u\in W^{1,\theta}_0(\Omega)$ positive solution to \GEV{$\tilde{s}_+,\lambda_1(q)$}.

Notice that \eqref{LI} forces $u\neq k\phi_q$ for all $k\in\R$: otherwise,
$$ -k^{p-1}\Delta_p^a \phi_q - k^{q-1}\Delta_q \phi_q = \tilde{s}_+ a k^{p-1} \phi_q^{p-1} + \lambda_1(q)k^{q-1}\phi_q^{q-1}, $$
whence
$$ -\Delta_p^a \phi_q = \tilde{s}_+ a \phi_q^{p-1}, $$
which implies that $\phi_q$ is a positive eigenfunction of $(-\Delta_p^a,W^{1,\theta_0}(\Omega))$, forcing $\tilde{s}_+=\lambda_1^a(p)$, in contrast to \eqref{LI}.

Hence, testing \GEV{$\tilde{s}_+,\lambda_1(q)$} with $\frac{\phi_q^p}{u^{p-1}}$, which is allowed by nonlinear regularity theory \cite{LI}, and using the Picone identities \cite{A,I}, we deduce
\begin{equation*}
\begin{split}
&\int_{\Omega} a|\nabla \phi_q|^{p} \dx +\lambda_1(q) \int_{\Omega} u^{q-p} \phi_q^p \dx \\
&>\int_{\Omega} a|\nabla u|^{p-2} \nabla u \cdot \nabla \left(\frac{\phi_q^p}{u^{p-1}}\right)\dx +\int_{\Omega} |\nabla u|^{q-2} \nabla u \cdot \nabla \left(\frac{\phi_q^p}{u^{p-1}}\right)\dx\\
&=\tilde{s}_+ \int_{\Omega}a\phi_q^p\dx +\lambda_1(q)\int_{\Omega} u^{q-p} \phi_q^p \dx,
\end{split}
\end{equation*}
contradicting the definition of $\tilde{s}_+$.
\end{rmk}



\section*{Acknowledgments}
\noindent
The authors warmly thank Lorenzo Brasco and Sunra Mosconi for the fruitful conversations about Picone-type inequalities. They are also indebted to Vladimir Bobkov for his valuable comments. \\
The authors are member of the {\em Gruppo Nazionale per l'Analisi Ma\-te\-ma\-ti\-ca, la Probabilit\`a e le loro Applicazioni}
(GNAMPA) of the {\em Istituto Nazionale di Alta Matematica} (INdAM); they are partially supported by the INdAM-GNAMPA Project 2023 titled {\em Problemi ellittici e parabolici con termini di reazione singolari e convettivi} (E53C22001930001). \\
Umberto Guarnotta is supported also by `PERITO' PRA 2020--2022 `PIACERI' Linea 3 of the University of Catania. \\
This study was partly funded by: Research project of MIUR (Italian Ministry of Education, University and Research) PRIN 2022 ``Nonlinear differential problems with applications to real phenomena'' (Grant Number: 2022ZXZTN2).\\
\\
\textbf{Conflict of interest statement:} On behalf of all authors, the corresponding author states that there is no conflict of interest.
\\
\textbf{Author contributions:} The authors have accepted responsibility for the entire content of this manuscript and
approved its submission. The authors have equal contribution for this paper, from the methodology to the writing, revision and editing.
\begin{small}

	\end{small}


\begin{thebibliography}{99}
\bibitem{A}
W. Allegretto and Y.-X. Huang, \textit{A Picone’s identity for the $p$-Laplacian and applications}, Nonlinear Anal. \textbf{32} (1998), 819–830.
\bibitem{ABDW}
E. Amoroso, G. Bonanno, G. D'Aguì, and P. Winkert, \textit{Two solutions for Dirichlet double phase problems with variable exponents}, Adv. Nonlinear Stud. \textbf{24} (2024), 734-747.
\bibitem{BCM2}
P. Baroni, M. Colombo, and G. Mingione, \textit{Harnack inequalities for double phase functionals}, Nonlinear Anal. \textbf{121} (2015), 206–222.
\bibitem{BCM}
P. Baroni, M. Colombo, and G. Mingione, \textit{Regularity for general functionals with double phase}, Calc. Var. Partial Differential Equations \textbf{57} (2018), Paper no. 62.
\bibitem{BHHK}
A. Benyaiche, P. Harjulehto, P. H\"ast\"o, and A. Karppinen, \textit{The weak Harnack inequality for unbounded supersolutions of equations with generalized Orlicz growth}, J. Differential Equations \textbf{275} (2021), 790–814.
\bibitem{BT}
V. Bobkov and M. Tanaka, \textit{On positive solutions for $(p,q)$-Laplace equations with two parameters}, Calc. Var. Partial Differential Equations \textbf{54} (2015), 3277–3301.
\bibitem{BT2}
V. Bobkov and M. Tanaka, \textit{Remarks on minimizers for $(p,q)$-Laplace equations with two parameters}, Commun. Pure Appl. Anal. \textbf{17} (2018), 1219–1253.
\bibitem{BTPicone}
V. Bobkov and M. Tanaka, \textit{Generalized Picone inequalities and their applications to $(p,q)$-Laplace equations}, Open Math. \textbf{18} (2020), 1030–1044.
\bibitem{BT3}
V. Bobkov and M. Tanaka, \textit{Multiplicity of positive solutions for $(p,q)$-Laplace equations with two parameters}, Commun. Contemp. Math. \textbf{24} (2022), Paper no. 2150008.
\bibitem{BF}
L. Brasco and G. Franzina, \textit{Convexity properties of Dirichlet integrals and Picone-type inequalities}, Kodai Math. J. {\bf 37} (2014), 769–799.
\bibitem{B}
H. Brezis, \textit{Functional analysis, Sobolev spaces and partial differential equations}, Universitext, Springer, New York, 2011.
\bibitem{CS}
F. Colasuonno and M. Squassina, \textit{Eigenvalues for double phase variational integrals,} Ann. Mat. Pura Appl. \textbf{195} (2016), 1917–1959.
\bibitem{CM}
M. Colombo and G. Mingione, \textit{Regularity for double phase variational problems}, Arch. Ration. Mech. Anal. \textbf{215} (2015), 443–496.
\bibitem{CGHW}
A. Crespo-Blanco, L. Gasi\'{n}ski, P. Harjulehto, and P. Winkert,\textit{ A new class of double phase variable exponent problems: Existence and uniqueness,} J. Differential Equations \textbf{323} (2022), 182–228.
\bibitem{CPW}
A. Crespo-Blanco, N.S. Papageorgiou, and P. Winkert, \textit{$(p,q)$-equations with negative concave terms}, J. Geom. Anal. \textbf{33} (2023), Paper no. 5.
\bibitem{CW}
A. Crespo-Blanco and P. Winkert, \textit{Nehari manifold approach for superlinear double phase problems with variable exponents}, Ann. Mat. Pura Appl. \textbf{203} (2024), 605–634.
 \bibitem{DFP}
 C. De Filippis and G. Palatucci,\textit{ H\"older regularity for nonlocal double phase equations,} J. Differential Equations \textbf{267} (2019), 547–586.
\bibitem{FZ}
 X. Fan and D. Zhao, \textit{On the spaces $L^{p(x)}(\Omega)$ and $W^{m,p(x)}(\Omega)$,} J. Math. Anal. Appl. \textbf{263} (2001), 424–446.
\bibitem{GGP}
L. Gambera, U. Guarnotta, and N.S. Papageorgiou, \textit{Continuous spectrum for a double-phase unbalanced growth eigenvalue problem}, Mediterr. J. Math. (2024), doi:10.1007/s00009-024-02664-7.
\bibitem{GP}
L. Gasi\'{n}ski and N.S. Papageorgiou, \textit{Constant sign and nodal solutions for  superlinear double phase problems}, Adv. Calc. Var. \textbf{14} (2020), 613-626.
\bibitem{GW1}
L. Gasi\'nski and P. Winkert, \textit{Constant sign solutions for double phase problems with superlinear nonlinearity}, Nonlinear Anal. \textbf{195} (2020), Paper no. 111739.
\bibitem{GW2}
 L. Gasi\'nski and P. Winkert, \textit{Sign changing solution for a double phase problem with nonlinear boundary condition via
the Nehari manifold}, J. Differential Equations \textbf{274} (2021), 1037–1066.	
\bibitem{GKS}
J. Giacomoni, D. Kumar, and K. Sreenadh, \textit{A qualitative study of  $(p,q)$ singular parabolic equations: local existence, Sobolev regularity and asymptotic behavior}, Adv. Nonlinear Stud. \textbf{21} (2021), 199-227.
\bibitem{GLW}
U. Guarnotta, R. Livrea, and P. Winkert, \textit{The sub-supersolution method for variable exponent double phase systems with nonlinear boundary conditions}, Atti Accad. Naz. Lincei Rend. Lincei Mat. Appl. \textbf{34} (2023), 617–639.
\bibitem{HK}
P. Haj{\l}asz and P. Koskela, \textit{Sobolev meets Poincaré}, C. R. Acad. Sci. Paris Sér. I Math. \textbf{320} (1995), 1211–1215.
\bibitem{HH}
P. Harjulehto and P. H\"{a}sto, \textit{Orlicz Spaces and Generalized Orlicz Spaces},  Lecture Notes in Mathematics \textbf{2236}, Springer, Cham, 2019.
\bibitem{HKM}
J. Heinonen, T. Kilpel\"ainen, and O. Martio, \textit{Nonlinear potential theory of degenerate elliptic equations}, Dover Publications Inc., Mineola, NY, 2006.
\bibitem{I}
Y. Il’yasov, \textit{On positive solutions of indefinite elliptic equations}, C. R. Acad. Sci. Paris Sér. I Math. \textbf{333} (2001), 533–538.
\bibitem{KJF} A. Kufner, O. John, and S. F\v{u}c\'{i}k, \textit{Function spaces}, Monographs and Textbooks on Mechanics of Solids
and Fluids, Mechanics: Analysis, Noordhoff International Publishing, Prague, 1977.
\bibitem{Le}
A. Lê, \textit{Eigenvalue problems for the $p$-Laplacian}, Nonlinear Anal. \textbf{64} (2006), 1057–1099.
\bibitem{LI}
G.M. Lieberman, \textit{Boundary regularity for solutions of degenerate elliptic equations}, Nonlinear Anal. \textbf{12} (1988), 1203–1219.
\bibitem{LD} 
Z. Liu and G. Dai, \textit{Existence and multiplicity results for double phase problem}, J. Differential Equations \textbf{265} (2018), 4311–4334.
\bibitem{PM1}
P. Marcellini, \textit{Regularity of minimizers of integrals in the calculus of variations with nonstandard growth conditions}, Arch. Ration. Mech. Anal. \textbf{105} (1989), 267–284.
\bibitem{PM2}
P. Marcellini, \textit{Regularity and existence of solutions of elliptic equations with $p-q$-growth conditions},  J. Differential Equations \textbf{90} (1991), 1-30.
\bibitem{PM3}
P. Marcellini, \textit{Growth conditions and regularity for weak solutions for nonlinear elliptic pdes}, J. Math. Anal. Appl. \textbf{501} (2021), Paper no. 124408.
\bibitem{PM4}
P. Marcellini, \textit{Local Lipschitz continuity for $p,q$-PDEs with explicit $u$-dependence}, Nonlinear Anal. \textbf{226} (2023), Paper no. 113066.
\bibitem{M}
S. Mosconi, \textit{A non-smooth Brezis-Oswald uniqueness result}, Open Math. {\bf 21} (2023), Paper no. 20220594.
\bibitem{O}
J. Ok, \textit{Regularity for double phase problems under additional integrability assumptions}, Nonlinear Anal. \textbf{194} (2020), Paper no. 111408.
\bibitem{P}
N.S. Papageorgiou, \textit{Double phase problems: a survey of some recent results}, Opuscula Math. \textbf{42} (2022), 257-278.		
\bibitem{PPR}
N.S. Papageorgiou, A. Pudełko, and V.D. R\u{a}dulescu, \textit{Non-autonomous $(p-q)$-equations with unbalanced growth}, Math. Ann. \textbf{385} (2023), 1707-1745.				
\bibitem{PRZ}
N.S. Papageorgiou, V.D. R\u{a}dulescu, and Y. Zhang, \textit{Resonant double phase equations}, Nonlinear Anal. Real World Appl. \textbf{64} (2022), Paper no. 103454.
\bibitem{PS}
P. Pucci and J. Serrin, \textit{The maximum principle}, Progr. Nonlinear Differential Equations Appl. \textbf{73}, Birkh\"auser Verlag, Basel, 2007.
\bibitem{Z}
V.V. Zhikov, \textit{Averaging functionals of the calculus of variations and elasticity theory}, Math. USSR-Izv. \textbf{29} (1987), 33–66.
\end{thebibliography}
\end{document}